\newtheorem{theorem}{Theorem}[section]
\newtheorem{corollary}[theorem]{Corollary}
\newtheorem{lemma}[theorem]{Lemma}
\newtheorem{proposition}[theorem]{Proposition}
\theoremstyle{definition}
\newtheorem{algorithm}{Exploration}
\newtheorem{remark}[theorem]{Remark}
\newcommand{\mr}{{\textup{\tiny\newmoon}}}
\newcommand{\ml}{{\textup{\tiny\fullmoon}}}
\numberwithin{equation}{section}
\newcommand{\dotms}{\dotsm}
\newcommand{\sV}{\mathscr{V}}
\newcommand{\R}{\mathbb{R}}
\newcommand{\E}{\mathbb{E}}
\newcommand{\cA}{\mathcal{A}}
\newcommand{\weakarrow}{{\overset{(d)}{\Longrightarrow}}}
\newcommand{\PR}{\mathbb{P}}
\newcommand{\bbG}{\mathbb{G}}
\newcommand{\br}{{\operatorname{br}}}
\newcommand{\ex}{{\operatorname{ex}}}
\newcommand{\cS}{\mathcal{S}}
	\author{David Clancy, Jr.}
 \date{\today}
\title{Asymptotics for the number of bipartite graphs with fixed surplus}
\begin{document}
\maketitle

\begin{abstract}
    In a recent work on the bipartite Erd\H{o}s-R\'{e}nyi graph, Do et al. (2023) established upper bounds on the number of connected labeled bipartite graphs with a fixed surplus. We use some recent encodings of bipartite random graphs in order to provide a probabilistic formula for the number of bipartite graphs with fixed surplus. Using this, we obtain asymptotics as the number of vertices in each class tend to infinity.
\end{abstract}

\section{Introduction}

Cayley's formula gives the number of trees on $n$ labeled vertices as $n^{n-2}$. Equivalently, this counts the number of spanning trees of $K_n$, the complete graph on $n$ vertices. Let $\bbG_n(k)$ denote the collection of connected spanning subgraphs of $K_n$ with exactly $n-1+k$ many edges. In \cite{Wright.77}, Wright established that for each fixed $k$
\begin{align*}
    \#\bbG_{n}(k) \sim \rho_k n^{n-2+ \frac{3k}{2}}
\end{align*}
for some constants $\rho_k>0$. Here, and throughout the article, we write $a_n\sim b_n$ if $a_n/b_n\to1$ as $n\to\infty$. In \cite{Spencer.97}, Spencer gave a probabilistic representation of $\rho_k$ as
\begin{align*}
    \rho_k = \frac{1}{k!} \E\left[\left(\int_0^1 B_{\ex}\,ds\right)^k\right]
\end{align*} where $B_{\operatorname{ex}}$ is a standard Brownian excursion. See \cite{Janson.07} for a more thorough literature review of this connection. See also \cite{Pal.12} for the case of $k = k_n\to\infty$ sufficiently slowly.

Let $K_{n,m}$ be the complete bipartite graph on $n+m$ labeled vertices, where one class has $n$ vertices while the other has $m$ vertices. We let $\bbG_{n,m}(k)$ be the collection of spanning graphs of $K_{n,m}$ with exactly $n+m-1 +k$ many edges. Scoins \cite{Scoins.62} established that
\begin{align}\label{eqn:bpTreeCount}
    \#\bbG_{n,m}(0) = n^{m-1} m^{n-1}.
\end{align}
Recently, in \cite{HSY.22,DEKM.23}, analogues of the result of Wright were established for $k$ fixed and $n,m\to\infty$. Using generating functions, the authors of \cite{HSY.22} show that for each fixed $k$ that
\begin{align*}
    \sum_{n,m: n+m=N} \binom{N}{n} \#\bbG_{n,m}(k) \sim \frac{1}{2^{k-1}} \#\bbG_{N}(k) \qquad \textup{as }N\to\infty.
\end{align*}
In \cite{DEKM.23}, the authors consider local versions and show that as $n,m\to\infty$ with $n/m\in[1/2,2]$
\begin{align*}
    \#\bbG_{n,m}(1) \sim \sqrt{\frac{\pi}{8}} n^{m-\frac12} m^{n-\frac12} \sqrt{n+m}\qquad\textup{and} \qquad \#\bbG_{n,m}(k)\le c_k (n+m)^{3k/2} n^{m-1} m^{n-1}
\end{align*}
for some constants $c_k\to 0$ as $k\to\infty$. The authors of \cite{DEKM.23} can obtain an explicit representation for the asymptotics $\#\bbG_{n,m}(1)$ as the $2$-core of any graph in $\bbG_{n,m}(1)$ is a cycle of even length. 

In this article, we obtain the asymptotics so long as $n/m\to\alpha\in\R_+$ and $k$ is fixed.  More precisely, we establish the following theorem.
\begin{theorem}\label{thm:main}
    Suppose that $n,m\to\infty$ and $n/m\to \alpha\in\R_+$. Then
    \begin{align*}
        \#\bbG_{n,m}(k) \sim (1+\alpha)^{k/2} \rho_k n^{m-1+k/2} m^{n-1 + k} .
    \end{align*}
\end{theorem}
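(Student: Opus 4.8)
The plan is to reduce the count to a moment computation for an area functional of a uniform random bipartite tree, and then to pass to the Brownian-excursion scaling limit. The starting point is the exploration-walk encoding of a random bipartite graph advertised in the abstract: it should deliver a probabilistic formula of the shape
\[
  \#\bbG_{n,m}(k) \;=\; n^{m-1} m^{n-1}\,\E\!\left[\binom{\mathcal{A}_{n,m}}{k}\right],
\]
where the prefactor is Scoins' tree count \eqref{eqn:bpTreeCount}, $\mathcal{A}_{n,m}$ is the area recorded by the exploration walk of a uniform labeled bipartite tree $T_{n,m}$, and the $\binom{\mathcal{A}_{n,m}}{k}$ slots enumerate the admissible surplus (back-)edges one may adjoin to $T_{n,m}$ to create a surplus-$k$ graph whose canonical spanning tree is again $T_{n,m}$. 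The content of this step is that the encoding makes the surplus-edge bookkeeping clean: each connected surplus-$k$ bipartite graph corresponds bijectively to its exploration tree together with $k$ admissible back-edges joining a vertex to an earlier-explored vertex of the opposite class, and $\mathcal{A}_{n,m}$ is, up to lower-order terms, a constant multiple of the total path length of $T_{n,m}$, i.e.\ the area under its height walk.

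The second step is to prove the scaling limit
\[
  \frac{\mathcal{A}_{n,m}}{\sqrt{nm(n+m)}} \;\weakarrow\; \int_0^1 B_{\ex}\,ds .
\]
Here I would feed the convergence of the bipartite exploration walk to a standard Brownian excursion through the continuous area functional. The two-type structure is what produces the normalization: the walk alternates between consuming vertices of the two classes, so its timescale and fluctuation size depend on $n$ and $m$ separately, and tracking the increments pins the area scale at $\sqrt{nm(n+m)} = (1+\alpha)^{1/2} n^{1/2} m\,(1+o(1))$ under $n/m\to\alpha$. Since the canonical root is effectively a uniformly chosen vertex, invariance of the limiting continuum tree under re-rooting guarantees that the limit is the \emph{standard} excursion area, which is exactly the object in Spencer's representation $\rho_k = \frac{1}{k!}\E[(\int_0^1 B_{\ex}\,ds)^k]$.

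The crux, and the step I expect to be the main obstacle, is upgrading this weak convergence to convergence of $k$-th moments. Because $\binom{\mathcal{A}_{n,m}}{k}\sim \mathcal{A}_{n,m}^k/k!$, what is actually needed is
\[
  \E\!\left[\binom{\mathcal{A}_{n,m}}{k}\right] \;\sim\; \frac{1}{k!}\,\bigl(nm(n+m)\bigr)^{k/2}\,\E\!\left[\Bigl(\int_0^1 B_{\ex}\,ds\Bigr)^{k}\right],
\]
and distributional convergence alone does not control the tail contribution to this moment. I would establish uniform integrability of $\bigl(\mathcal{A}_{n,m}/\sqrt{nm(n+m)}\bigr)^{k}$ for each fixed $k$, via an a priori tail bound on the height (hence area) of $T_{n,m}$ — either exponential-moment estimates for the exploration walk mirroring the single-type case, or a direct combinatorial estimate on the falling-factorial moments $\E[(\mathcal{A}_{n,m})_j]$ that matches the excursion-area moments term by term. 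The delicate point is that the class-dependent increments of the bipartite walk must be shown not to spoil the sub-Gaussian height tails that make all moments converge.

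Finally, combining the probabilistic formula with the moment asymptotics and substituting $\rho_k = \frac{1}{k!}\E[(\int_0^1 B_{\ex}\,ds)^k]$ together with $nm(n+m) = (1+\alpha)\,n\,m^2\,(1+o(1))$ yields
\[
  \#\bbG_{n,m}(k) \;\sim\; n^{m-1}m^{n-1}\cdot\frac{1}{k!}\,\bigl(nm(n+m)\bigr)^{k/2}\,\E\!\left[\Bigl(\int_0^1 B_{\ex}\,ds\Bigr)^{k}\right] \;=\; (1+\alpha)^{k/2}\rho_k\, n^{m-1+k/2}\, m^{n-1+k},
\]
which is the claimed asymptotic.
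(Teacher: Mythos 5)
Your proposal follows essentially the same route as the paper: the identity $\#\bbG_{n,m}(k)=n^{m-1}m^{n-1}\,\E\bigl[\binom{W}{k}\bigr]$ with $W$ an area functional of the exploration walk is Proposition \ref{prop:Radon-Nikodym}, the scaling limit $W/(m\sqrt{n})\weakarrow \sqrt{1+\alpha}\int_0^1 B_{\ex}\,dt$ (note $\sqrt{nm(n+m)}\sim \sqrt{1+\alpha}\,m\sqrt{n}$) is Proposition \ref{prop:Wlim}, and the moment convergence is handled exactly as you anticipate, via sub-Gaussian tail bounds on the exploration walk (Corollary \ref{cor:uicor}). The only cosmetic differences are that the paper identifies the excursion limit through a discrete Vervaat transform of conditioned Poisson walks rather than re-rooting invariance, and that the admissible surplus edges run to \emph{active} (on-stack) vertices of the opposite class, which is what makes $W$ the centered area of the child-count processes.
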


This yields the following corollary.
\begin{corollary}
    Suppose that $\frac{n}{n+m}\to \gamma\in(0,1)$ as $n\to\infty$. Then
    \begin{align*}
        \#\bbG_{n,m}(k) \sim \left(\gamma(1-\gamma)\right)^{k/2} \rho_k  (n+m)^{3k/2}n^{m-1} m^{n-1} .
    \end{align*}
\end{corollary}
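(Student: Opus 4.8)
The plan is to deduce the corollary directly from Theorem~\ref{thm:main} by a change of variables in the scaling, since $k$ is fixed and all the substance already lies in the theorem. First I would translate the hypothesis: if $\frac{n}{n+m}\to\gamma\in(0,1)$, then $\frac{m}{n+m}\to 1-\gamma$, and hence $\frac{n}{m}\to\frac{\gamma}{1-\gamma}=:\alpha\in\R_+$, so Theorem~\ref{thm:main} applies with this value of $\alpha$. I would also record the identity $1+\alpha = \frac{1}{1-\gamma}$, which is precisely what converts the prefactor $(1+\alpha)^{k/2}$ into the claimed $(\gamma(1-\gamma))^{k/2}$, up to powers of $n+m$.

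Next I would show that the two asymptotic expressions agree. Writing $N = n+m$ and splitting off the fixed powers, Theorem~\ref{thm:main} reads
\begin{align*}
    \#\bbG_{n,m}(k) \sim (1+\alpha)^{k/2}\rho_k\, n^{m-1}m^{n-1}\cdot n^{k/2}m^{k},
\end{align*}
and the point is that $n^{k/2}m^{k} = \left(\frac{n}{N}\right)^{k/2}\left(\frac{m}{N}\right)^{k} N^{3k/2}$. Substituting this and forming the ratio against the target expression $(\gamma(1-\gamma))^{k/2}\rho_k N^{3k/2}n^{m-1}m^{n-1}$, the factors $\rho_k$, $n^{m-1}$, $m^{n-1}$, and $N^{3k/2}$ all cancel, leaving
\begin{align*}
    \frac{(1+\alpha)^{k/2}}{(\gamma(1-\gamma))^{k/2}}\left(\frac{n}{N}\right)^{k/2}\left(\frac{m}{N}\right)^{k}.
\end{align*}
Sending $n\to\infty$ and using $\frac{n}{N}\to\gamma$, $\frac{m}{N}\to 1-\gamma$, together with $1+\alpha=\frac{1}{1-\gamma}$, this ratio tends to $1$; hence the two expressions are asymptotically equivalent and the corollary follows by transitivity of $\sim$.

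The one conceptual point, and the only place where any care is needed, is that $k$ is held fixed, so the exponents $k/2$, $k$, and $3k/2$ are constants. This is what makes the manipulation legitimate: for a fixed exponent the asymptotic equivalences $n\sim\gamma N$ and $m\sim(1-\gamma)N$ pass to the corresponding powers, while the growing exponents $m-1$ and $n-1$ are never perturbed (the factors $n^{m-1}m^{n-1}$ appear identically on both sides). I do not anticipate any genuine obstacle here; the argument is a short bookkeeping computation whose entire weight is carried by Theorem~\ref{thm:main}.
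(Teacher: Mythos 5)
Your proof is correct and is exactly the deduction the paper intends (the paper states the corollary without proof as an immediate consequence of Theorem \ref{thm:main}): set $\alpha=\gamma/(1-\gamma)$, rewrite $n^{k/2}m^k$ in terms of $(n+m)^{3k/2}$, and use $1+\alpha=(1-\gamma)^{-1}$ to identify the constant. The bookkeeping checks out, so nothing further is needed.
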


\subsection{Overview}

Our proof will be almost entirely probabilistic, in the spirit of Spencer \cite{Spencer.97}.

In Section \ref{sec:Exploration} we describe the (breadth-first) exploration of a graph $G\in \bbG_{n,m}(k)$ for some $k\ge 0$. This exploration gives us two processes $X^\ml, X^\mr$ encoding the number of vertices discovered by each vertex in the exploration. In Section \ref{sec:propofexplo}, we describe the law of these processes when the graph $G$ is a uniformly chosen tree in $\bbG_{n,m}(0)$. In Section \ref{sec:countingGraphs}, we relate $\#\bbG_{n,m}(k)/\#\bbG_{n,m}(0)$ to the expectation of a particular random variable $W_{n,m}$.

In Section \ref{sec:weakconv} we discuss weak convergence. In Section \ref{sec:bridgePoilim}, we prove weak convergence involving some auxiliary processes $Y^{\ml}, Y^\mr$. These processes are connected to $X^\ml, X^\mr$ in Section \ref{sec:vervaatTransform} and to the process $W_{n,m}$ in Section \ref{sec:wnmlim}. In Section \ref{sec:wnmlim} we prove the convergence of the moments of $W_{n,m}$ in order to obtain Theorem \ref{thm:main}.

\section{Exploration of graphs}\label{sec:Exploration}

Let us now explain the exploration of a graph $G\in \bigsqcup_{k\ge 0} \bbG_{n,m}(k)$.  For concreteness, we color the vertices of $K_{n,m}$ as either white or black. We write the vertex set of $K_{n,m}$ as $V_n^\ml\sqcup V_m^\mr$ where $V^\ml_n =\{i^\ml: i\in [n]\}$ are the white vertices and $V^\mr_m = \{i^\mr: i\in[m]\}$ are the black vertices. The exploration is analogous to the explorations in \cite{Federico.19, Wang.23, Clancy.24_Bipartite}. 

We maintain a stack of active vertices that we denote by $\cA_t$ for $t = 0,1,\dotsm, n+m$. We always start with the stack $\cA_{0}$ containing the vertex $v_1:= 1^\ml$. We will define several sequences $(\chi^\ml_j;j\in[n])$, $(\chi^\mr_j;j\in[m])$, and $(\gamma_j;j\in[n+m])$. We will use these sequences to define several processes
\begin{equation}\label{eqn:XNdefs}
    X^\ml(t) = \sum_{s=1}^t \chi_s^\ml,\qquad X^\mr(t) = \sum_{s=1}^t \chi^\mr_s,\qquad N^\ml (t) = \sum_{s=1}^t \gamma_s,\qquad N^\mr(t) = t-N^\ml(t).
\end{equation} 
\begin{algorithm}[Breadth-first exploration of $G$]\label{alg:bfs}
    For $t = 1,2,\dotsm, n+m$ the stack $\cA_{t-1} = (x_1,\dotsm, x_s)$ is of length $s\ge 1$ (by induction). By step $t$, we have explored $a:=N^\ml(t-1)$ many white vertices and $b :=N^\mr(t-1)$ many black vertices. We now explore vertex $x_1$.\\
\indent \textbf{if:} $x_1 = v_{a+1} \in V^\ml_n$, then we find the neighbors of $x_1$ that are either in $\cA_{t}$ or have been unexplored. Each neighbor in $\cA_t$ corresponds to a cycle created. The unexplored ones will be elements of $V^\mr_m$ and we will label these as $w_{b+1},\dotsm, w_{b+r}$ for some $r$ where each $w_{b+j} = i_j^\mr$ for some $i_1<i_2<\dotms<i_r$. Set $\chi_u^\ml = r$ and update
        $
        \cA_{t} = (x_2,\dotsm,x_s, w_{b+1},\dotms, w_{b+r}).
        $ Set $\gamma_{t} = 1$. \\
  \indent  \textbf{else: }$x_1 = w_{b+1}\in V^\mr_m$, then we find the neighbors of $x_1$ that are either in $\cA_{t}$ or have been unexplored. Each neighbor in $\cA_t$ corresponds to a cycle created. The unexplored ones will be elements of $V^\mr_m$ and we will label these as $v_{a+1},\dotsm, v_{a+r'}$ for some $r'
    $ where $w_{a+j} = i_j^\ml$ for some $i_1<i_2<\dotms<i_{r'}$. Set $\chi_u^\mr = r'$ and update
        $
        \cA_{t} = (x_2,\dotsm,x_s, v_{b+1},\dotms, v_{b+r'}).
        $ Set $\gamma_t = 0$.
\end{algorithm}

We call the pair $(X^\ml,X^\mr)$ the child count processes defined via \eqref{eqn:XNdefs} associated with the graph $G$.  Moreover, it is easy to see that $X^\mr\circ X^\ml(t) - X^\mr\circ X^\ml(t-1)$ is the number of white grand-children of the white vertex $v_t$. Hence, standard properties of random trees and \L ukasiewicz paths (see e.g. \cite{LeGall.05}) imply that $Z(t) = X^\mr\circ X^\ml(t)-t$ for $t = 0,1,2,\dotsm, n+m$
has increments in $\{-1,0,1,\dotsm\}$ and satisfies
\begin{equation}\label{eqn:Ztreeencode}
    Z(t) \ge 0 \textup{ for all }t =0,1,\dotms, n-1\qquad \textup{and}\qquad Z(n) = -1.
\end{equation}

Observe that the pair $(\chi^\ml,\chi^\mr)$ constructed in Exploration \ref{alg:bfs} satisfies $\sum_{j=1}^n \chi_j^\ml = m$ and $\sum_{j=1}^m\chi^\mr_j = n-1$ by simply noting which vertices are added to the stack $\cA_t$. Given any two sequences $(\chi^\ml,\chi^\mr)$ of non-negative integers, we can define $X^\ml, X^\mr$ using \eqref{eqn:XNdefs}; however, this need not correspond to a tree $T\in \bbG_{n,m}(0)$. Using the bijection between \L ukasiewicz paths and planar trees (see e.g. \cite{LeGall.05}), we can see that there exists a unique planar tree $T^{\tt plan}$ built from $(\chi^\ml,\chi^\mr)$ whenever the \L ukasiewicz path $Z(t) = X^\mr\circ X^\ml(t)-t$ satisfies \eqref{eqn:Ztreeencode}. We will call such sequences $(\chi^\ml,\chi^\mr)$ \textit{admissible.}

\begin{figure}[h!]
\begin{center}
\begin{tikzpicture}
  [grow'=up, level distance=8mm,
   every node/.style={fill = white,circle, draw , inner sep=1pt},
   level 1/.style={sibling distance=30mm,nodes={fill=black}, text = white},
   level 2/.style={sibling distance=15mm,nodes={fill=white}, text=black} ,
   level 3/.style={sibling distance=6.5mm,nodes={fill=black}, text = white},
   level 4/.style={sibling distance=3mm,nodes={fill=white}, text = black},
   level 5/.style={sibling distance=1mm,nodes={fill=black}, text = white}]
  \node {1}
     child {node{3}
        child {node {4}
            child{ node {7}
                child{node {3}
                }
            }
        }
        child {node {6}
            child{ node {1}
                child{ node {7}
                }
            }
            child{ node{4}
            }
        }
     }
     child {node{5}
     }
     child {node{8}
        child {node {2}
            child{ node{2}
            }
            child{node{6}
            }
        }
        child {node {5}
        }
     }     
     ;
\end{tikzpicture}
\caption{A tree $T$ on $7$ labeled white vertices and $8$ labeled black vertices. When exploring any graph $G$ whose exploration produces the tree $T$ above, the possible edges that need to be checked to find cycles are (listed in order of possible appearance) $5^\mr4^\ml,$ $ 5^\mr6^\ml$, $8^\mr4^\ml$, $8^\mr6^\ml$, $6^\ml7^\mr,$ $ 2^\ml7^\mr$, $2^\ml1^\mr$, $2^\ml4^\mr$, $5^\ml 7^\mr,$ $5^\ml 1^\mr,$ $5^\ml 4^\mr,$ $5^\ml 2^\mr,$ $5^\ml 6^\mr,$ $1^\mr3^\ml$, $4^\mr3^\ml$, $4^\mr7^\ml$,$2^\mr3^\ml$, $2^\mr7^\ml$,$6^\mr3^\ml$, $6^\mr7^\ml$. }
\label{fig:figtree}
\end{center}
\end{figure}
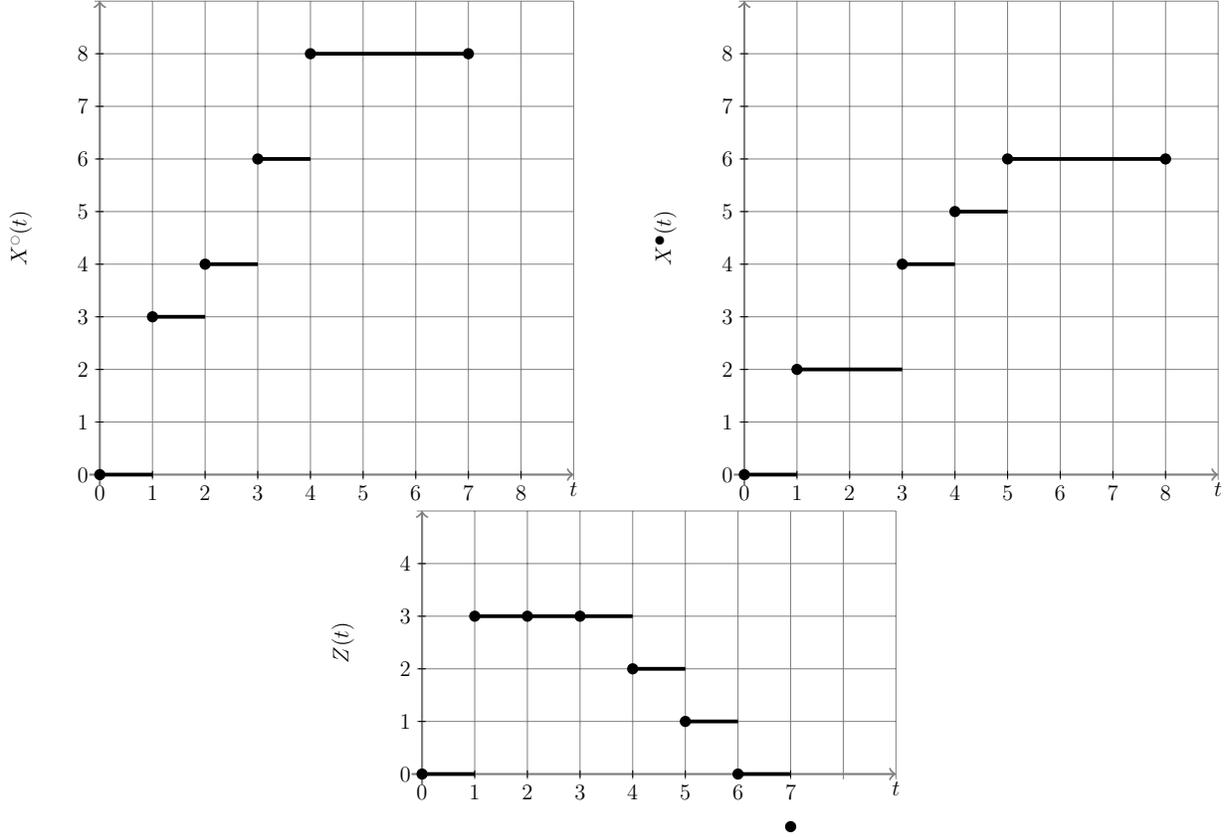
\begin{figure}[t]   
\begin{center}
\begin{tikzpicture}[scale=.7, transform shape]
 \draw[very thin,color=gray] (-0.1,-.1) grid (9,9);
  \draw[thick, ->, color = gray] (-0.2,0) -- (9,0) node[below,text=black] {$t$};
  \draw[thick,->, color = gray] (0,-0.2) -- (0,9);
  \draw (-1.5,4.5) node[text=black, rotate = 90] {$X^\ml(t)$};
    \draw[line width = .5mm] plot[jump mark left, mark=*] 
    coordinates {(-.0,-.0) (1,3) (2,4) (3,6) (4,8) (7,8)};
    \foreach \i [count=\j from 0] in {0, 1, 2, 3, 4, 5, 6, 7, 8}
{
   \draw (\j,2pt) -- ++ (0,-4pt) node[below] {$\j$};
   \draw (2pt,\j) -- ++ (-4pt,0) node[left]  {$\i$};
}
\end{tikzpicture}\qquad 
\begin{tikzpicture}[scale=.7, transform shape]
 \draw[very thin,color=gray] (-0.1,-.1) grid (9,9);
  \draw[thick, ->, color = gray] (-0.2,0) -- (9,0) node[below, text = black] {$t$};
  \draw[thick,->, color = gray] (0,-0.2) -- (0,9);
\draw (-1.5,4.5) node[text=black, rotate = 90] {$X^\mr(t)$};
    \draw[line width = .5mm] plot[jump mark left, mark=*] 
    coordinates {(-.0,-.0) (1,2) (3,4) (4,5) (5,6) (8,6)};
       \foreach \i [count=\j from 0] in {0, 1, 2, 3, 4, 5, 6, 7, 8}
{
   \draw (\j,2pt) -- ++ (0,-4pt) node[below] {$\j$};
   \draw (2pt,\j) -- ++ (-4pt,0) node[left]  {$\i$};
}
\end{tikzpicture}
\begin{tikzpicture}[scale=.7, transform shape]
 \draw[very thin,color=gray] (-0.1,-.1) grid (9,5);
  \draw[thick, ->, color = gray] (-0.2,0) -- (9,0) node[below, text = black] {$t$};
  \draw[thick,->, color = gray] (0,-0.2) -- (0,5);
\draw (-1.5,2.5) node[text=black, rotate = 90] {$Z(t)$};
    \draw[line width = .5mm] plot[jump mark left, mark=*] 
    coordinates {(-.0,-.0) (1,3) (2,3) (3,3) (4,2) (5,1) (6,0) (7,-1)};
       \foreach \i [count=\j from 0] in {0, 1, 2, 3, 4}
{
   \draw (\j,2pt) -- ++ (0,-4pt) node[below] {$\j$};
   \draw (2pt,\j) -- ++ (-4pt,0) node[left]  {$\i$};
}
  \draw (5,2pt) -- ++ (0,-4pt) node[below] {$5$};
   \draw (6,2pt) -- ++ (0,-4pt) node[below] {$6$};
   \draw (7,2pt) -- ++ (0,-4pt) node[below] {$7$};
\end{tikzpicture}
 \caption{The child count processes for the tree $T$ depicted in Figure \ref{fig:figtree}, along with its (white) \L ukasiewicz path.}
    \label{fig:paths}
\end{center}
\end{figure}
\subsection{Properties of the exploration}\label{sec:propofexplo}
The next lemma tells us the law of the child count sequences $(X^\ml,X^\mr)$ for a uniform tree $T\in \bbG_{n,m}(0)$. 
\begin{lemma}
Let $(\chi^\ml,\chi^\mr)$ be admissible. Define $X^\ml(t) = \sum_{s\le t} \chi^\ml_s$ and similarly define $X^\mr$. 
The number of trees $T\in \bbG_{n,m}(0)$ whose child count processes are $X^\ml, X^\mr$ is
\begin{align*}
    \frac{(n-1)!}{\prod_{j=1}^m \chi_j^\mr!} \frac{m!}{\prod_{j=1}^n \chi_j^\ml!} = \binom{n-1}{\chi_1^\mr,\chi_2^\mr,\dotsm, \chi_m^\mr} \binom{m}{\chi_1^\ml,\chi_2^\ml,\dotsm, \chi^\ml_n}.
\end{align*}
\end{lemma}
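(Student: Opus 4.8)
The plan is to set up a bijection between the labeled trees being counted and a purely combinatorial labeling problem on a \emph{single} fixed planar tree, and then to count those labelings directly.

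First I would record the structural dichotomy already implicit in the excerpt. Since Exploration \ref{alg:bfs} is deterministic, each $T\in\bbG_{n,m}(0)$, rooted canonically at $v_1 = 1^\ml$, produces exactly one pair of child count sequences; and by the \L ukasiewicz path bijection the admissible pair $(\chi^\ml,\chi^\mr)$ determines a unique planar shape $T^{\tt plan}$. Conversely, running the exploration simply amounts to ordering the children of each explored vertex by increasing label. Thus a labeled tree rooted at $1^\ml$ carries the same information as its planar shape together with a labeling in which, at every vertex, the children are listed in strictly increasing order of label (a \emph{canonical planarization}). Grouping these planarized trees by their planar shape is, by admissibility, the same as grouping by the child count sequence $(\chi^\ml,\chi^\mr)$, so the quantity to be computed equals the number of increasing-sibling labelings of the single fixed shape $T^{\tt plan}$.

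Next I would count those labelings, noting that the white labels and the black labels may be assigned independently, because the ``increasing among siblings'' constraint for white vertices involves only white labels and likewise for black. For the black vertices: every black vertex has a white parent, the sibling groups are exactly the sets of black children of the white vertices (hence of sizes $\chi_1^\ml,\dots,\chi_n^\ml$), and these groups partition all $m$ black vertices. A valid labeling is produced by choosing, for each white vertex, which black labels form its children, i.e. an ordered set partition of $[m]$ into blocks of sizes $\chi_i^\ml$; the increasing constraint then fixes the order within each block uniquely, giving $m!/\prod_{i}\chi_i^\ml! = \binom{m}{\chi_1^\ml,\dots,\chi_n^\ml}$ choices. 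The white vertices are handled identically, except that the root carries the forced label $1^\ml$ and is therefore excluded: the remaining $n-1$ white vertices split into sibling groups of sizes $\chi_1^\mr,\dots,\chi_m^\mr$, yielding $(n-1)!/\prod_{j}\chi_j^\mr! = \binom{n-1}{\chi_1^\mr,\dots,\chi_m^\mr}$ choices. Multiplying the two independent counts gives the claimed formula.

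The step I expect to demand the most care is the reduction of the first paragraph, namely verifying that fixing the child count sequence really is equivalent to fixing the planar shape, without creating a circularity with the label-dependent exploration order. The point to check is that in the canonical planarization the planar order of siblings already coincides with the increasing-label order, so the breadth-first exploration of $T^{\tt plan}$ visits vertices in a label-independent order; consequently every increasing-sibling labeling of $T^{\tt plan}$ re-produces the \emph{same} sequence $(\chi^\ml,\chi^\mr)$, and the products $\prod_{i}\chi_i^\ml!$ and $\prod_{j}\chi_j^\mr!$ depend only on the multiset of child counts, not on the order in which they are read off. Once this is pinned down, the two counting steps are routine.
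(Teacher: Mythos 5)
Your argument is correct and is essentially the paper's own proof: both reduce the count to labelings of the unique planar shape $T^{\tt plan}$ determined by the admissible pair, with siblings required to appear in increasing label order, and then count the white and black label assignments independently via the two multinomial coefficients. Your write-up is more explicit about why the exploration order is label-independent once the canonical planarization is fixed, which is a worthwhile clarification but not a different method.
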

\begin{proof}
    Let $T^{\tt plan}$ be the rooted planar tree constructed from $(\chi^\ml,\chi^\mr)$. This is uniquely defined by $(\chi^\ml,\chi^\mr)$. We must now assign labels to the vertices in $T^{\tt plan}$ that are consistent with Exploration \ref{alg:bfs} above. The first multinomial coefficient counts the number of ways to assign the $n-1$ labels to the children of the black vertices in increasing order, the second counts the number of ways to assign labels to the children of white vertices.   
\end{proof}

The next lemma gives a probabilistic way to construct $X^\ml$ and $X^\mr$.

\begin{lemma}\label{lem:poissonRep1}
    Let $(\xi^\mr_j;j\ge 1), (\xi^\ml_j;j\ge1)$ be i.i.d. mean 1 Poisson random variables. Define $Y^\ml(t) = \sum_{s\le t} \xi^\ml_s$ and similarly define $Y^\mr$. Let $S(t) = Y^\mr\circ Y^\ml(t)-t$. Let $X^\ml, X^\mr$ be the child count processes for a uniformly chosen random tree $T\in \bbG_{n,m}(0)$. Let $$E_{n,m} = \{ \inf\{t:S(t)=-1\} = n\}\cap \{Y^\ml(n) = m\}.$$ Then $(Y^\ml,Y^\mr)|E_{n,m}\overset{d}{=} (X^\ml,X^\mr).$
   
\end{lemma}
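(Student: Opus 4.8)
The plan is to compute both probability mass functions explicitly on their common (finite) support and observe that they are proportional to the same quantity, so that they must coincide after normalization. Since $X^\ml$ is indexed by $[n]$ and $X^\mr$ by $[m]$ while $Y^\ml,Y^\mr$ are full processes, I read the claimed identity as an identity of the laws of the increment vectors $(\xi^\ml_j)_{j=1}^n,(\xi^\mr_j)_{j=1}^m$ (equivalently, of the restrictions of $Y^\ml$ to $[n]$ and $Y^\mr$ to $[m]$) against $(\chi^\ml,\chi^\mr)$.

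First I would show that $E_{n,m}$ is measurable with respect to the finitely many increments $(\xi^\ml_j)_{j=1}^n$ and $(\xi^\mr_j)_{j=1}^m$. Because $Y^\ml$ is nondecreasing, on $\{Y^\ml(n)=m\}$ we have $Y^\ml(t)\le m$ for every $t\le n$, so $S(t)=Y^\mr\circ Y^\ml(t)-t$ for $t\le n$ depends only on those two increment vectors; in particular $Y^\mr$ is never probed beyond index $m$. Moreover $S$ has increments bounded below by $-1$ (as $Y^\mr$ is nondecreasing), so $\{\inf\{t:S(t)=-1\}=n\}$ is equivalent to $S(t)\ge0$ for $t\le n-1$ together with $S(n)=-1$, which is exactly \eqref{eqn:Ztreeencode} for the path $Z$ built from the realized increments. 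Writing $(c^\ml,c^\mr)$ for the realized vectors, it follows that $E_{n,m}$ holds iff $(c^\ml,c^\mr)$ is admissible with $\sum_j c^\ml_j=m$ and $\sum_j c^\mr_j=n-1$; call this finite set $\mathcal{A}_{n,m}$. This $\mathcal{A}_{n,m}$ is also exactly the support of the child count processes $(X^\ml,X^\mr)$ of a uniform tree in $\bbG_{n,m}(0)$.

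Next I would compute the two laws on $\mathcal{A}_{n,m}$. For a uniform tree, the preceding counting lemma together with Scoins' formula \eqref{eqn:bpTreeCount} gives, for $(c^\ml,c^\mr)\in\mathcal{A}_{n,m}$,
\[
\p\big(X^\ml=c^\ml,\,X^\mr=c^\mr\big)=\frac{1}{n^{m-1}m^{n-1}}\,\frac{(n-1)!\,m!}{\prod_{j=1}^m c^\mr_j!\,\prod_{j=1}^n c^\ml_j!}.
\]
On the Poisson side, independence and the mean-one Poisson mass function give
\[
\p\Big(\big((\xi^\ml_j)_{j=1}^n,(\xi^\mr_j)_{j=1}^m\big)=(c^\ml,c^\mr)\Big)=\frac{e^{-(n+m)}}{\prod_{j=1}^n c^\ml_j!\,\prod_{j=1}^m c^\mr_j!},
\]
and since $E_{n,m}$ coincides with $\{((\xi^\ml_j)_{j=1}^n,(\xi^\mr_j)_{j=1}^m)\in\mathcal{A}_{n,m}\}$, for such sequences this equals the joint mass of the increments intersected with $E_{n,m}$. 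Both right-hand sides are proportional to $\prod_j (c^\ml_j!)^{-1}\prod_j (c^\mr_j!)^{-1}$ with a constant that does not depend on the sequence.

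Finally I would conclude by normalization: both $(X^\ml,X^\mr)$ and $(Y^\ml,Y^\mr)\mid E_{n,m}$ are probability distributions supported on the same finite set $\mathcal{A}_{n,m}$, and their masses are proportional to the identical function $(c^\ml,c^\mr)\mapsto \prod_j (c^\ml_j!\,c^\mr_j!)^{-1}$; hence their proportionality constants must agree and the two laws coincide. (As a consistency check, summing the tree formula over $\mathcal{A}_{n,m}$ recovers $\#\bbG_{n,m}(0)$, and the same sum normalizes the Poisson side, which incidentally yields $\p(E_{n,m})=e^{-(n+m)}n^{m-1}m^{n-1}/((n-1)!\,m!)$.) The step I expect to demand the most care is the first one: confirming that $E_{n,m}$ is determined by the finite increment vectors and that it matches admissibility together with the two sum constraints — in particular that $\{Y^\ml(n)=m\}$ keeps $Y^\mr$ from being evaluated past index $m$, and that the single event $\{\inf\{t:S(t)=-1\}=n\}$ simultaneously enforces the admissibility inequalities $S(t)\ge0$ on $[0,n-1]$ and the endpoint $S(n)=-1$, the latter being (given $Y^\ml(n)=m$) the constraint $\sum_j c^\mr_j=n-1$. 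Everything downstream is a bookkeeping computation with factorials.
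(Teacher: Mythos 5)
Your proposal is correct and follows essentially the same route as the paper: compute both probability mass functions on the admissible sequences, observe that each is proportional to $\prod_j (\chi^\ml_j!)^{-1}\prod_j(\chi^\mr_j!)^{-1}$, and conclude by normalization. The extra care you take in verifying that $E_{n,m}$ is exactly the event that the realized increments form an admissible pair with $\sum_j \chi^\ml_j = m$ and $\sum_j \chi^\mr_j = n-1$ (in particular that $Y^\mr(m)=n-1$ holds a.s.\ on $E_{n,m}$) is a point the paper states more briefly, but it is the same argument.
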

\begin{proof}
Let $(x^\ml,x^\mr)$ be a fixed (deterministic) admissible sequence. The previous lemma gives
\begin{align*}
    \PR\left(X^\ml(t) = \sum_{s\le t}x_s^\ml, X^\mr(t) = \sum_{s\le t} x_s^\mr\right) = \frac{(n-1)! m!}{\#\bbG_{n,m}(0)}\frac{1}{\prod_{s=1}^n x_s^\ml! \prod_{u=1}^m x^\mr_u!}.
\end{align*}

Note that given $E_{n,m}$, $Y^\mr(m) = n-1$ a.s. Hence,if $(x^\mr,x^\ml)$ is admissible we have
\begin{align*}
    \PR&\left(Y^\ml(t) = \sum_{s\le t}x_s^\ml, Y^\mr(t) = \sum_{s\le t} x_s^\mr\bigg|E_{n,m}\right) = \frac{1}{\PR(E_{n,m})} \PR(Y^\ml(t) = \sum_{s\le t}x_s^\ml, Y^\mr(t) = \sum_{s\le t} x_s^\mr,E_{n,m})\\
    &=\frac{1}{\PR(E_{n,m})} \PR(\xi^\ml_s = x^\ml_s, \xi^\mr_u = x^\mr_u\textup{ for all }s,u)=  \frac{1}{\PR(E_{n,m})} \prod_{s=1}^n  \frac{e^{-1}}{x_s^\ml} \prod_{u=1}^m \frac{e^{-1}}{x_u^\mr}.
\end{align*} Both are inversely proportional to ${\prod_{s=1}^n x_s^\ml! \prod_{u=1}^m x^\mr_u!}$, proving the desired statement.
\end{proof}

Since $\#\bbG_{n,m}(0) = n^{m-1}m^{n-1}$ we can see that $\PR(E_{n,m}) = \frac{e^{-(n+m)} n^{m-1}m^{n-1}}{(n-1)!m!}$. We now give a probabilistic proof of this, and hence a probabilistic proof of \eqref{eqn:bpTreeCount}.
\begin{lemma}
    Let $E_{n,m}$ be defined in Lemma \ref{lem:poissonRep1}. Then
    \begin{equation*}
        \PR(E_{n,m}) = \frac{e^{-(n+m)} n^{m-1} m^{n-1}}{(n-1)!m!}.
    \end{equation*}
\end{lemma}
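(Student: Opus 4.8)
The plan is to recognize $E_{n,m}$ as a first-passage event for the skip-free walk $S$ and to evaluate its probability by a rotation (cycle lemma) argument. Crucially, this gives an independent computation that does not invoke Scoins' formula \eqref{eqn:bpTreeCount}, so that \eqref{eqn:bpTreeCount} becomes a corollary rather than an input.

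First I would record the increment structure of $S(t) = Y^\mr\circ Y^\ml(t)-t$. Since $Y^\ml(t)-Y^\ml(t-1) = \xi^\ml_t$, its increments are
$$\Delta_t := S(t)-S(t-1) = \sum_{j=Y^\ml(t-1)+1}^{Y^\ml(t)} \xi^\mr_j - 1,$$
i.e. a sum of $\xi^\ml_t$ fresh $\Poi(1)$ variables minus $1$. In particular $\Delta_t\ge -1$, so $S$ is skip-free downward, and since $S(0)=0$ the event $E_{n,m}$ is exactly $\{S(t)\ge 0,\ 0\le t\le n-1\}\cap\{S(n)=-1\}\cap\{Y^\ml(n)=m\}$. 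Because successive steps consume disjoint blocks of the black variables $\xi^\mr$, the marked pairs $(\xi^\ml_t,\Delta_t)_{t\ge 1}$ are i.i.d.; conditionally on $\xi^\ml_t=k$ the increment $\Delta_t$ is a sum of $k$ fresh $\Poi(1)$ variables minus $1$, the same law for each $t$. Verifying this exchangeability carefully is the one point that needs care.

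Next I would apply the cycle lemma to the i.i.d.\ marked sequence $((\xi^\ml_1,\Delta_1),\dots,(\xi^\ml_n,\Delta_n))$. For any integers $d_1,\dots,d_n\ge -1$ with $\sum_t d_t = -1$, exactly one of the $n$ cyclic rotations yields a path staying $\ge 0$ on $\{0,\dots,n-1\}$ and equal to $-1$ at time $n$. Writing $G_\sigma$ for the event that the $\sigma$-rotated $\Delta$-path has this property and $A=\{\sum_t \xi^\ml_t = m\}$, both $A$ and $\{\sum_t\Delta_t=-1\}$ are shift-invariant, so cyclic exchangeability gives $\PR(G_\sigma\cap A)=\PR(E_{n,m})$ for every $\sigma$. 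Summing over the $n$ rotations and using $\sum_\sigma \bone_{G_\sigma}=\bone\{\sum_t\Delta_t=-1\}$ yields
$$\PR(E_{n,m}) = \frac1n\,\PR\Big(\sum_{t=1}^n\xi^\ml_t = m,\ \sum_{t=1}^n\Delta_t = -1\Big).$$

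Finally, on $A$ the total number of black variables consumed is exactly $m$, so there $\sum_t\Delta_t = \sum_{j=1}^m\xi^\mr_j - n$, and the joint event becomes $\{\sum_{s=1}^n\xi^\ml_s=m\}\cap\{\sum_{j=1}^m\xi^\mr_j=n-1\}$, an intersection of events governed by disjoint families of independent $\Poi(1)$ variables. Since $\sum_{s=1}^n\xi^\ml_s\sim\Poi(n)$ and $\sum_{j=1}^m\xi^\mr_j\sim\Poi(m)$, this factors and gives
$$\PR(E_{n,m}) = \frac1n\cdot\frac{e^{-n}n^m}{m!}\cdot\frac{e^{-m}m^{n-1}}{(n-1)!} = \frac{e^{-(n+m)}n^{m-1}m^{n-1}}{(n-1)!\,m!},$$
as claimed. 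The main obstacle is the rotation step: one must confirm the i.i.d.\ (hence cyclically exchangeable) structure of the marked increments $(\xi^\ml_t,\Delta_t)$ and check that the constraint $Y^\ml(n)=m$ is genuinely shift-invariant, so that the cycle lemma can be applied simultaneously to the path $S$ and to the mark sum $\sum_t\xi^\ml_t$.
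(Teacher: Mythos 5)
Your proposal is correct and follows essentially the same route as the paper: both arguments reduce $\PR(E_{n,m})$ to $\tfrac1n\PR(A_{n,m})$ via the cycle lemma applied to the cyclically exchangeable increments of $S$, and then factor $\PR(A_{n,m})$ as $\PR(\Poi(n)=m)\,\PR(\Poi(m)=n-1)$. The only cosmetic difference is order of operations --- the paper conditions on $A_{n,m}$ first and rotates under the conditional law, while you rotate the unconditioned i.i.d.\ marked increments and carry the shift-invariant event along --- and your explicit verification that the pairs $(\xi^\ml_t,\Delta_t)$ are i.i.d.\ is the same exchangeability fact the paper uses implicitly.
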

\begin{proof}
    As already noted, we have
    \begin{align*}
        E_{n,m} = \{Y^\ml(n) = m\}\cap \{Y^\mr(m) = n-1\} \cap\{\inf\{t: S(t) = -1\} = n\}.
    \end{align*}
    Set $A_{n,m} = \{Y^\ml(n) = m\}\cap \{Y^\mr(m) = n-1\}$. Since $Y^\ml(n)\sim \operatorname{Poi}(n)$ and $Y^\mr(m)\sim \operatorname{Poi}(m)$ are independent Poisson random variables, $\PR(A_{n,m}) =  \frac{e^{-n}n^m}{m!} \frac{e^{-m}m^{n-1}}{(n-1)!}$. Hence
    \begin{align*}
        \PR&(E_{n,m})=  \PR(\inf\{t: S(t) = -1\} = n|A_{n,m}) e^{-(n+m)} \frac{n^m m^{n-1}}{m!(n-1)!}.
    \end{align*}
    Under $\PR(-|A_{n,m})$ the increments $W_j = S(j)- S(j-1)$ for $j = 1,\dotms, n$ are cyclically exchangeable and $\sum_{j=1}^n W_j = -1$. Hence, by the cyclic lemma (see, e.g. \cite[Lemma 6.1]{Pitman.06}) $\PR(\inf\{t: S(t) = -1\} = n|A_{n,m}) = n^{-1}$. This gives the desired result.
\end{proof}

\begin{remark}
    We discuss this cyclic lemma in more detail in Section \ref{sec:vervaatTransform}.
\end{remark}

\subsection{Counting graphs}\label{sec:countingGraphs}

Let us now turn to the graph counting. In the tree in Figure \ref{fig:figtree}, we see that there are 20 possible edges to $T$ to form a graph $G$ with the same breadth-first spanning tree. It turns out we can represent this as a functional of the child count processes $X^\ml, X^\mr$. As this only depends on the labeled tree $T$, we set
\begin{equation}\label{eqn:Wdef}
W = W(T) = - m(n-1) + \sum_{s=0}^{n-1} X^\ml(s) + \sum_{u=0}^{m-1} X^\mr(u) 
\end{equation}
where $X^\ml, X^\mr$ is the child count process of $T$. It is easy to see that using Figure \ref{fig:paths} that for the tree $T$ in Figure \ref{fig:figtree} that
$$\sum_{s=0}^{7-1} X^\ml(s) = 37 \qquad \textup{and}\qquad \sum_{u=0}^{8-1} X^\mr(u) = 31,$$ and, therefore, $W(T) = 37+31-48 = 20$.
\begin{proposition}\label{prop:Radon-Nikodym}
    Suppose that $T\in \bbG_{n,m}(0)$ is fixed. Then the number of graphs $G\in \bbG_{n,m}(k)$ whose spanning tree is $T$ is $\binom{W(T)}{k}$. In particular,
\begin{align*}
    \#\bbG_{n,m}(k) = \E\left[\binom{W(T)}{k}\right] n^{m-1}m^{n-1}\qquad\textup{where }\qquad T\sim \operatorname{Unif}(\bbG_{n,m}(0)).
\end{align*}
\end{proposition}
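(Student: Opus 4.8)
The plan is to exhibit, for a fixed $T\in\bbG_{n,m}(0)$, a bijection between the graphs $G\in\bbG_{n,m}(k)$ whose breadth-first spanning tree is $T$ and the $k$-element subsets of a canonical set $\cC(T)$ of non-tree edges read off from the exploration, and then to check that $|\cC(T)|=W(T)$. Granting both, the number of such $G$ is $\binom{W(T)}{k}$; summing over all $T\in\bbG_{n,m}(0)$, of which there are $n^{m-1}m^{n-1}$ by \eqref{eqn:bpTreeCount}, and dividing by this number rewrites $\#\bbG_{n,m}(k)$ as the advertised expectation over a uniform $T$.

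First I would define $\cC(T)$ by running Exploration \ref{alg:bfs} on $T$: at the step popping the head $x_1$ of $\cA_{t-1}$, adjoin to $\cC(T)$ the edge from $x_1$ to each opposite-colored vertex of $\cA_{t-1}\setminus\{x_1\}$. Each such edge is a genuine non-tree edge — the parent of $x_1$ has already been explored and the children of $x_1$ are not yet discovered — and it is produced exactly once, at the step popping the first-explored of its two endpoints. The structural fact I would prove by induction on $t$ is that adjoining any $E'\subseteq\cC(T)$ leaves the exploration unchanged step-for-step: if the explorations of $T$ and $T\cup E'$ have agreed through step $t-1$, then when $x_1$ is popped each $E'$-neighbor $y$ of $x_1$ is joined to $x_1$ by an edge of $\cC(T)$, hence $y$ was already discovered by step $t$, so no new child is created and the two explorations still agree at step $t$. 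In particular the exploration of $T\cup E'$ returns the spanning tree $T$. For the converse, if $G\in\bbG_{n,m}(k)$ has spanning tree $T$ then its exploration discovers exactly the children of $T$, so by the same induction its stack evolution coincides with that of $T$; each of the $k$ non-tree edges of $G$, detected when the first of its endpoints is popped, is then an opposite-colored stack neighbor and hence lies in $\cC(T)$ (an undiscovered endpoint would be adopted as a new child, altering $T$). Therefore $G\mapsto E(G)\setminus E(T)$ is a bijection onto the $k$-subsets of $\cC(T)$, and the number of such $G$ is $\binom{|\cC(T)|}{k}$.

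The quantitative heart is the identity $|\cC(T)|=W(T)$. Writing $a=N^\ml(t-1)$ and $b=N^\mr(t-1)$, the stack before step $t$ contains $X^\ml(a)-b$ black vertices (all black vertices discovered so far, being children of the explored white vertices, minus those already explored) and $1+X^\mr(b)-a$ white vertices (the root together with the discovered non-root white vertices, minus those explored). Summing the count of opposite-colored stack vertices over the $n+m$ steps,
\begin{align*}
    |\cC(T)| = \sum_{t:\,x_1\in V_n^\ml}\!\big(X^\ml(a)-b\big) + \sum_{t:\,x_1\in V_m^\mr}\!\big(1+X^\mr(b)-a\big).
\end{align*}
As $t$ runs over the $n$ white steps, $a$ takes each value $0,\dots,n-1$ once, producing $\sum_{s=0}^{n-1}X^\ml(s)$; the $m$ black steps likewise give $\sum_{u=0}^{m-1}X^\mr(u)$ together with a constant term $m$. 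The remaining terms satisfy
\begin{align*}
    \sum_{t:\,x_1\in V_n^\ml} b + \sum_{t:\,x_1\in V_m^\mr} a = nm,
\end{align*}
because the left side records, for each of the $nm$ pairs $(v,w)\in V_n^\ml\times V_m^\mr$, exactly one contribution, according to which endpoint is explored first. Combining, $|\cC(T)|=\sum_{s=0}^{n-1}X^\ml(s)+\sum_{u=0}^{m-1}X^\mr(u)+m-nm=W(T)$, since $-m(n-1)=m-nm$, matching \eqref{eqn:Wdef}.

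I expect the main obstacle to be exactly this last identification: one must track the colored stack composition carefully and recognize the offset $-m(n-1)$ as the pair-counting identity above, the delicate point being that the black and white stack sizes are governed by $X^\ml$ and $X^\mr$ evaluated at the \emph{interleaved} exploration counts $a=N^\ml(t-1)$ and $b=N^\mr(t-1)$ rather than at the global time $t$. The bijection of the second paragraph is conceptually the crux but reduces to the single invariance induction; once both are in place, summing $\binom{W(T)}{k}$ over a uniform tree $T$ and invoking \eqref{eqn:bpTreeCount} establishes the claimed count.
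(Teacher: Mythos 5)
Your proposal is correct and follows essentially the same approach as the paper: you identify the potential surplus edges with the opposite-colored members of the stack at each step of the exploration, sum the stack counts $X^\ml(a)-b$ and $1+X^\mr(b)-a$ over the $n+m$ steps, and match the total to $W(T)$. The only differences are cosmetic — you spell out the bijection (which the paper merely asserts) via an explicit invariance induction, and you close the algebra with the pair-counting identity $\sum b + \sum a = nm$ where the paper telescopes binomial coefficients; both yield the same offset $m-nm=-m(n-1)$.
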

\begin{proof}
    Observe that in the exploration, a surplus edge can be added exclusively when we are exploring a white vertex (resp. black vertex) at time $t$ and pair it with a black vertex (resp. white vertex) in the stack $\cA_{t-1}$.

    Let us now look at the stack $\cA_{t-1}$. The top of the stack is white if $\gamma_t = 1$ and black if $\gamma_t = 0$. The vertices that have been discovered up-to and including step $t-1$ are those discovered by vertices $(v_s; s\le1+ N^\ml(t-1))$ and $(w_u: u\le N^\mr(t-1))$. Moreover, those vertices have all been removed from the stack by time $t$. Hence, $\cA_{t-1}$ consists of $X^\ml(N^\ml(t-1)) - N^\mr(t-1)$ many black vertices and $1+X^\mr(N^\mr(t-1)) - N^\ml(t-1)$ many white vertices.   Therefore, the total number of possible cycles that can be added is 
    \begin{align*}
        W' &= \sum_{t=1}^{n+m}\left[ (X^\ml\circ N^\ml(t-1) - N^\mr(t-1))\gamma_t +  (X^\mr\circ N^\mr(t-1)-N^\ml(t-1)) (1-\gamma_t)\right].
    \end{align*} Hence there are $\binom{W'}{k}$ many graphs $G\in \bbG_{n,m}(k)$ whose spanning tree is $T$. We claim that $W' = W(T)$. To see this, note that for each $s = 0,1,\dotsm, n-1$ there is precisely one $t=1,\dotms, n+m$ such that $s 
 = N^\ml(t-1)$ and $\gamma_t = 1$. Hence, for any $f$
 \begin{align*}
     \sum_{t=1}^{n+m} f(N^\ml(t-1))\gamma_t = \sum_{s=0}^{n-1} f(s).
 \end{align*}
 Similarly, $\sum_{t=1}^{n+m} f(N^\mr(t-1)) (1-\gamma_t) = \sum_{s=0}^{m-1} f(s)$. Since $N^\ml(t) = t-N^\mr(t)$ and $\gamma_t = N^\ml(t)-N^\ml(t-1)$ we get
 \begin{align*}
     \sum_{t=1}^n& (X^\ml\circ N^\ml(t-1) - N^\mr(t-1)) \,\gamma_t = \sum_{s=0}^{n-1} (X^\ml(s) +s) - \sum_{t=1}^{n+m} (t-1) \gamma_t.
 \end{align*} A similar formula can be established for the other sums against $(1-\gamma_t)$:
 \begin{align*}
\sum_{t=1}^n& (1+X^\mr\circ N^\mr(t-1) - N^\ml(t-1)) \,(1-\gamma_t) = \sum_{s=0}^{m-1} (1+X^\mr(s) +s) - \sum_{t=1}^{n+m} (t-1) (1-\gamma_t).
 \end{align*}Hence
 \begin{align*}
   W' &= \sum_{s=0}^{n-1}X^\ml(s) + \sum_{s=0}^{m-1} X^\mr(s) + \binom{n}{2}+\binom{m}{2} + m - \sum_{t=1}^{n+m} (t-1)(\gamma_t+1-\gamma_t)\\
   &= \sum_{s=0}^{n-1}X^\ml(s) + \sum_{s=0}^{m-1} X^\mr(s) + \binom{n}{2}+\binom{m}{2} + m - \binom{n+m}{2} = W(T).
 \end{align*}
 \end{proof}

 \section{Scaling Limits}\label{sec:weakconv}

 We are now left to investigate the random variable $W_{n,m}:= W(T)$ where $T\sim \operatorname{Unif}(\bbG_{n,m}(0))$. We will henceforth include the subscript $n$ in all the processes and assume that $m = m_n$ also depends on $n$. To do this, it is easier to start with the scaling limits for $Y^\ml_n$ and $Y^\mr_n$ from Lemma \ref{lem:poissonRep1}. We will also let $A_{n,m}$ be the event
 \begin{align}\label{eqn:AnmDef}
     A_{n,m} = \{Y^\ml_n(n) = m, Y^\mr_n(m) = n-1\}.
 \end{align}

\subsection{Fluctuations of Poisson Bridges}\label{sec:bridgePoilim}

We start with the following consequence of Donsker's theorem for empirical processes. In the sequel, given any function $f:[n]\to \R$ we extend this to all of $[0,n]\subset\R$ by setting $f(t) = f(\lfloor t\rfloor)$.

 \begin{proposition}\label{prop:PoissonBridgeLimit}
    Let $Y^\ml = Y^\ml_n$ and $Y^\mr = Y^\mr_n$ be as in Lemma \ref{lem:poissonRep1} for some sequence $m = m_n\to\infty$ as $n\to\infty$. Conditionally given $A_{n,m}$ from \eqref{eqn:AnmDef}, it holds jointly in the $J_1$ topology
    \begin{align}
      \label{eqn:Yl} \left(\sqrt{m} \left(m^{-1} Y^\ml(nt) - t\right);{t\in[0,1]} \right)\big| A_{n,m}\weakarrow B^\ml_{\operatorname{br}}\\
      \label{eqn:Yr} \left( \sqrt{n} \left(n^{-1} Y^\mr(mt) - t\right);{t\in[0,1]}\right) \big| A_{n,m}\weakarrow B^\mr_{\operatorname{br}}
    \end{align}
    for two independent Brownian bridges $B^\ast_{\operatorname{br}}$, $\ast\in \{\ml,\mr\}$. 
\end{proposition}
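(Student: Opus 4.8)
The plan is to establish the two bridge limits via Donsker's invariance principle for partial-sum processes, conditioned on the endpoints. The processes $Y^\ml(nt) = \sum_{s\le nt}\xi^\ml_s$ and $Y^\mr(mt) = \sum_{u\le mt}\xi^\mr_u$ are, before conditioning, independent random walks with i.i.d.\ mean-$1$, variance-$1$ Poisson increments. The unconditioned functional CLT gives $\sqrt{m}(m^{-1}Y^\ml(nt) - (n/m)t) \weakarrow B^\ml$ and $\sqrt{n}(n^{-1}Y^\mr(mt) - (m/n)t)\weakarrow B^\mr$ for independent standard Brownian motions $B^\ml, B^\mr$; I would first record the correct centering and scaling, noting that under $n/m\to\alpha$ the deterministic drift $(n/m)t$ matches $t$ up to an error that conditioning on the endpoint will absorb.

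First I would treat the two walks separately and only at the end invoke their independence. The key analytic input is that conditioning a Donsker-scaled random walk to hit a prescribed endpoint turns the limiting Brownian motion into a Brownian bridge. Concretely, $A_{n,m}$ fixes $Y^\ml(n) = m$ and $Y^\mr(m) = n-1$, i.e.\ it pins both endpoints. I would appeal to the standard result that a random walk bridge (increments conditioned to sum to a fixed value near the mean) converges after Donsker scaling to a Brownian bridge; see the empirical-process / conditioned-random-walk literature (the cleanest route is Liggett's invariance principle for exchangeable increments, or equivalently a local-CLT argument). Because the two endpoint constraints in $A_{n,m}$ act on disjoint collections of increments $(\xi^\ml_s)_{s\le n}$ and $(\xi^\mr_u)_{u\le m}$, conditioning on $A_{n,m}$ conditions each walk independently on its own endpoint, so the two limits are jointly independent Brownian bridges.

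The two steps I would carry out in order are: (i) verify the finite-dimensional distributions converge to those of a Brownian bridge, using the local central limit theorem for the Poisson walk to control the conditional density $\PR(\,\cdot\mid Y^\ml(n)=m)$ — the ratio of a local CLT probability at an interior time to the endpoint probability produces exactly the bridge covariance $s\wedge t - st$; and (ii) establish tightness in the $J_1$ (equivalently, since the limit is continuous, the uniform) topology, which follows from tightness of the unconditioned walk together with the fact that the conditioning probability $\PR(A_{n,m})$ stays bounded below by a polynomial factor (the local-CLT normalization $\asymp (nm)^{-1/2}$), so conditional tightness reduces to unconditional moment bounds.

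The main obstacle is the interaction between the endpoint conditioning and the mismatch between the drift $(n/m)t$ and the target centering $t$ when $n\ne m$ exactly. Since the statement conditions on $Y^\ml(n)=m$ exactly (not $Y^\ml(n)=\lfloor(n/m)\cdot n\rfloor$), I must check that recentering the walk around $t$ rather than its true mean $(n/m)t$ is harmless after conditioning: the conditioning forces the endpoint to $m$, so the conditioned process already lives on the scale where $t$ is the correct centering, and the $O(\sqrt{m}\,|n/m-1|\,t)$ drift discrepancy is exactly cancelled by the bridge pinning. Making this cancellation rigorous — i.e.\ showing the conditioned centered process has the bridge limit uniformly in the allowed range of $n/m$ — is where I expect the real work to lie; the local CLT with a uniform (in the endpoint location) error estimate is the tool that closes it.
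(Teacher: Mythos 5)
Your overall route --- treat each Poisson walk as a random walk conditioned on its endpoint, prove finite-dimensional convergence by a local CLT, and obtain tightness by transferring unconditional bounds through the conditioning --- is genuinely different from the paper's. The paper instead uses the exact distributional identity that, given $\{Y^\ml(n)=m\}$, the process $\bigl(m^{-1}Y^\ml(t)\bigr)_{t\le n}$ \emph{equals in law} the empirical distribution function $F_m(t/n)$ of $m$ i.i.d.\ uniforms on $[0,1]$ (the points of a Poisson process conditioned on their number are i.i.d.\ uniform), and then quotes Donsker's theorem for empirical processes; the conditioning disappears entirely in one line. Your observation that $A_{n,m}$ factors over the two disjoint, independent increment families, so that the two limiting bridges are independent, is correct and matches the paper's (implicit) reasoning.

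There is, however, a genuine gap in two of your steps, and it is the same gap both times. The proposition assumes only $m=m_n\to\infty$; it does not assume $n/m\to 1$, nor even $n/m\to\alpha$. Since $Y^\ml(n)\sim\Poi(n)$ has mean $n$, the event $\{Y^\ml(n)=m\}$ is a \emph{large deviation} whenever $|n-m|\gg\sqrt{n}$. In that regime: (a) $\PR(A_{n,m})$ is exponentially small, not $\asymp(nm)^{-1/2}$, so your reduction of conditional tightness to unconditional moment bounds by dividing by $\PR(A_{n,m})$ fails outright; and (b) the normal-range local CLT and the ``standard'' conditioned-random-walk invariance principle do not apply as stated, and your assertion that the $O(\sqrt{m}\,|n/m-1|)$ drift discrepancy ``is exactly cancelled by the bridge pinning'' is precisely the claim that requires proof rather than an observation. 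The missing ingredient that closes both holes is the exponential-family structure of the Poisson law: the conditional law of $(\xi^\ml_1,\dots,\xi^\ml_n)$ given $\sum_{s\le n}\xi^\ml_s=m$ does not depend on the common rate, so you may retilt the increments to $\Poi(m/n)$, for which $m$ \emph{is} the typical endpoint and all of your normal-range arguments become valid (with per-step variance $m/n$, consistent with the $\sqrt{m}$ normalization in \eqref{eqn:Yl}). Equivalently --- and this is the paper's shortcut --- that conditional law is $\operatorname{Multinomial}(m;1/n,\dots,1/n)$, i.e.\ the uniform empirical process, after which Donsker's theorem for empirical processes delivers finite-dimensional convergence, tightness, and (via the DKW-type bound the paper invokes in the following lemma) the uniform tail control all at once.
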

\begin{proof}
    Let $U_j$ be independent and uniformly distributed on $[0,1]$. For each $n\ge 1$, define $F_n(t) =  n^{-1}\sum_{j=1}^n 1_{[U_j\le t]}.$ 
    By Donsker's theorem \cite{Marckert.08}
    \begin{align*}
        \left(\sqrt{n}(F_n(t)-t);t\in[0,1]\right) \weakarrow B_{\operatorname{br}}
    \end{align*} for a Brownian bridge $B_{\operatorname{br}}$.
    Also, by standard properties about Poisson processes and uniform random variables, it is easy to see that
    \begin{equation}\label{eqn:yandunif}
        \left(m^{-1}Y^\ml(t); t = 0,1,\dotsm,n \right)\big|A_{n,m} \overset{d}{=} \left(F_m(t/n); t = 0,1,\dotsm,n\right)
    \end{equation} 
    An application of \cite[pg 146]{Billingsley.99} that gives the convergence \eqref{eqn:Yl}. The limit in \eqref{eqn:Yr} is similar.
\end{proof}

In the sequel, we will need some more precise control on the growth of the processes $m^{-1} Y_{n}^\ml(nt)-t$ and $n^{-1} Y_n^\mr(mt) - t$. As in the proof of Proposition \ref{prop:PoissonBridgeLimit}, we can do this by a uniform empirical process. The following lemma follows from the proof Lemma 13 in \cite{ABBG.12}.
\begin{lemma}
    Let $(U_j;j\ge1)$ be i.i.d. uniform $[0,1]$. Then there is a universal constant $C, \lambda>0$ such that for all $n\ge 1$
    \begin{align*}
        \PR\left(\sup_{t\in[0,1]} |\sqrt{n}(F_n(t)-t)| \ge x\right) \le C \exp(-\lambda x^2).
    \end{align*}
\end{lemma}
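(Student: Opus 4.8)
The statement is precisely a Gaussian tail bound for the two–sided Kolmogorov–Smirnov statistic $K_n := \sqrt{n}\,\sup_{t\in[0,1]}|F_n(t)-t|$, so the plan is to recognize it as (a form of) the Dvoretzky–Kiefer–Wolfowitz inequality and supply universal constants. First I would dispose of the trivial ranges of $x$: since $|F_n(t)-t|\le 1$ for every $t$, we have $K_n\le\sqrt n$ almost surely, so the left-hand side vanishes for $x>\sqrt n$; and for $x$ below any fixed threshold the bound is automatic once $C$ is taken large enough, because the probability is at most $1$. Thus it suffices to produce the bound for $x$ in a range like $1\le x\le\sqrt n$, which frees me to be slightly wasteful in constants.

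The clean route is to invoke the DKW inequality in Massart's sharp form, $\PR\left(\sup_{t\in[0,1]}(F_n(t)-t)>\eps\right)\le e^{-2n\eps^2}$ for every $n\ge1$ and $\eps>0$, together with its two–sided counterpart $\PR\left(\sup_{t}|F_n(t)-t|>\eps\right)\le 2e^{-2n\eps^2}$. Substituting $\eps=x/\sqrt n$ gives
\[
\PR\left(\sup_{t\in[0,1]}\left|\sqrt n\,(F_n(t)-t)\right|\ge x\right)\le 2\,e^{-2x^2},
\]
which is exactly the claimed estimate with $C=2$ and $\lambda=2$. This is the honest one–line proof, and it is the reason the constants may be taken universal.

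For a self–contained derivation in the spirit of the cited Lemma~13 of \cite{ABBG.12}, I would proceed in three steps. By the symmetry $U_j\mapsto 1-U_j$, the two one–sided suprema $\sup_t(F_n(t)-t)$ and $\sup_t(t-F_n(t))$ have the same law, so it is enough to bound one of them and double. Using the order statistics $U_{(1)}\le\cdots\le U_{(n)}$ and right–continuity of $F_n$, the one–sided supremum is attained at a jump, giving the finite representation $\sup_t(F_n(t)-t)=\max_{1\le k\le n}\left(\tfrac kn-U_{(k)}\right)$, and $\{\tfrac kn-U_{(k)}\ge\eps\}=\{nF_n(\tfrac kn-\eps)\ge k\}$ is a binomial upper–deviation event controllable by a Chernoff bound. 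The \textbf{main obstacle} is exactly here: a naive union bound over these $n$ events (equivalently over an $O(\sqrt n)$–point grid obtained from the monotonicity of $F_n$ and of $t\mapsto t$) produces a prefactor growing polynomially in $n$, which destroys the universal constant. To absorb the $n$–dependence into the exponent one must use more structure, for instance the reflection/ballot argument underlying DKW, or the fact that the counting process $nF_n(t)$ carries an exponential (super)martingale whose optional stopping at the first crossing of the level $nt+n\eps$ yields an $n$–free bound via Doob's maximal inequality; the Bernstein–type control of the log–moment generating function of a centered Bernoulli supplies the quadratic exponent $-\lambda x^2$ uniformly in $n$. Everything apart from this uniformity in $n$ is routine.
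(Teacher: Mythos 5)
Your proposal is correct, and in fact it supplies more than the paper does: the paper offers no argument of its own for this lemma, merely asserting that it ``follows from the proof of Lemma 13 in \cite{ABBG.12}.'' Your route through the Dvoretzky--Kiefer--Wolfowitz inequality in Massart's form is the standard and cleanest proof, and after the substitution $\eps = x/\sqrt{n}$ it yields the bound with the explicit universal constants $C=2$, $\lambda=2$. Your preliminary reductions (the bound is vacuous for $x>\sqrt{n}$ since $K_n\le\sqrt n$, and trivial for bounded $x$ by enlarging $C$) are correct and also quietly take care of the one technical wrinkle in Massart's theorem, namely that the \emph{one-sided} inequality $\PR(\sup_t(F_n(t)-t)>\eps)\le e^{-2n\eps^2}$ only holds once $e^{-2n\eps^2}\le 1/2$; the two-sided form $2e^{-2n\eps^2}$ holds for all $\eps>0$, so even this caveat is not needed. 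Your diagnosis of where a naive self-contained argument goes wrong --- a union bound over the $n$ order-statistic events produces a polynomial prefactor that must be absorbed into the exponent via a ballot/reflection or exponential-supermartingale argument --- is accurate and is precisely the content of the DKW proof. The only thing each approach ``buys'' differently is that citing \cite{ABBG.12} keeps the paper's references internal to the random-graph literature, whereas your argument is shorter, self-contained modulo a classical textbook inequality, and gives explicit constants.
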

Looking at \eqref{eqn:yandunif} the previous lemma gives the following.
\begin{corollary}\label{cor:uicor}
    Suppose the assumptions of Proposition \ref{prop:PoissonBridgeLimit}. There exists constants $C,\lambda>0$ such that
    \begin{align*}
        \PR\left(\sup_{t\in[0,1]} \left|\sqrt{m}(m^{-1} Y_n^\ml(nt)-t) \right| + \sup_{t\in[0,1]} \left|\sqrt{n} (n^{-1}Y_n^\mr(mt)-t)\right|> x\bigg|A_{n,m}\right) \le C e^{-\lambda x^2}.
    \end{align*}
\end{corollary}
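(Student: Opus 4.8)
The plan is to deduce Corollary~\ref{cor:uicor} from the preceding tail bound on the uniform empirical process together with the identity~\eqref{eqn:yandunif} relating the conditioned Poisson processes to uniform empirical distribution functions. The key observation is that the corollary bounds a sum of two suprema, while the lemma bounds a single supremum with a sub-Gaussian tail, so the main work is bookkeeping: translating the conditioned Poisson processes into empirical processes, and then combining two tail bounds into one via a union bound.

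First I would handle the white process. Recall from \eqref{eqn:yandunif} that, conditionally given $A_{n,m}$, we have the distributional identity
\begin{equation*}
    \left(m^{-1} Y^\ml(t); t = 0,1,\dotsm, n\right)\big| A_{n,m} \overset{d}{=} \left(F_m(t/n); t = 0,1,\dotsm, n\right),
\end{equation*}
where $F_m$ is the empirical distribution function of $m$ i.i.d.\ uniforms. Substituting $t = ns$ and noting that the extension convention $f(t) = f(\lfloor t\rfloor)$ is respected on both sides, this gives $\sqrt{m}\,(m^{-1} Y_n^\ml(ns) - s) \overset{d}{=} \sqrt{m}\,(F_m(s) - s)$ as processes in $s\in[0,1]$, conditionally on $A_{n,m}$. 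Applying the lemma with $n$ replaced by $m$ yields
\begin{equation*}
    \PR\left(\sup_{t\in[0,1]} \left|\sqrt{m}(m^{-1} Y_n^\ml(nt)-t)\right| \ge x\,\bigg|\,A_{n,m}\right) \le C e^{-\lambda x^2}.
\end{equation*}
The black process $Y^\mr$ is handled by the analogous identity (the ``$Y^\mr$ is similar'' remark in the proof of Proposition~\ref{prop:PoissonBridgeLimit}), which gives the same sub-Gaussian bound for $\sup_{t\in[0,1]}|\sqrt{n}(n^{-1} Y_n^\mr(mt)-t)|$ conditionally on $A_{n,m}$, with the same universal constants $C,\lambda$ since the lemma's constants do not depend on the sample size.

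Finally I would assemble the two bounds. If the sum of the two suprema exceeds $x$, then at least one of them exceeds $x/2$, so by a union bound the conditional probability in the corollary is at most $2Ce^{-\lambda x^2/4}$. Replacing $(C,\lambda)$ by $(2C,\lambda/4)$ (still universal) gives a bound of the advertised form $Ce^{-\lambda x^2}$, completing the proof. There is no serious obstacle here: the content is entirely contained in the empirical-process tail lemma, and the only subtlety is making sure the distributional identity \eqref{eqn:yandunif} is applied with the correct sample size for each color ($m$ uniforms for the white process, $n$ for the black), so that the sub-Gaussian constants are genuinely universal and can be shared across the two terms.
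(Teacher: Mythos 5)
Your proposal is correct and matches the paper's (very terse) argument: the paper simply asserts that the corollary follows from the empirical-process tail lemma via the identity \eqref{eqn:yandunif}, and your write-up fills in exactly the intended steps --- the distributional transfer for each colour and the union bound with $x/2$. The only point glossed over (by both you and the paper) is that conditionally on $A_{n,m}$ the black process corresponds to $n-1$ rather than $n$ uniforms, contributing an extra deterministic error of order $n^{-1/2}$ that is absorbed into the constants.
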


\subsection{Fluctuations of $S_n$}
The convergence of the process $S_n(t) = Y_n^\mr\circ Y^\ml_n(t) - t$ given $A_{n,m}$ follows from Proposition \ref{prop:PoissonBridgeLimit} and a standard result on the fluctuations of compositions (see \cite{Vervaat.72, Whitt.80} or \cite[Section 13.3 ]{Whitt.02}). We recall this with the next lemma.
\begin{lemma}\label{lem:vervaat1}
    Suppose that $x_n,y_n:[0,1]\to[0,1]$ are non-decreasing c\`adl\`ag functions, and $\psi, \varphi:[0,1]\to \R$ are continuous functions. Let $c_n\to\infty$ and suppose that in the Skorohod $J_1$ topology both $(c_n(x_n(t)-t);t\in[0,1]) \to \psi$ and $(c_n(y_n(t)-t);t\in[0,1])\to \varphi.$ Then
    \begin{align*}
        (c_n(x_n\circ y_n(t)-t);t\in[0,1]) \to \psi+\varphi.
    \end{align*}
\end{lemma}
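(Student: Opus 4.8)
The plan is to upgrade the two hypothesized $J_1$ convergences to uniform convergence and then to split the fluctuation of the composition into an outer and an inner contribution. First I would invoke the standard fact that $J_1$ convergence to a \emph{continuous} limit is automatically uniform convergence: since $\psi$ and $\varphi$ are continuous on the compact interval $[0,1]$, hence uniformly continuous, the hypotheses give
\[
\sup_{s\in[0,1]}\left|c_n(x_n(s)-s)-\psi(s)\right|\to 0 \qquad\text{and}\qquad \sup_{t\in[0,1]}\left|c_n(y_n(t)-t)-\varphi(t)\right|\to 0.
\]
Indeed, if $\lambda_n$ are the near-identity time changes realizing the $J_1$ distance for the first sequence, then $\|c_n(x_n-\mathrm{id})-\psi\|_\infty$ is bounded by $\|c_n(x_n-\mathrm{id})-\psi\circ\lambda_n\|_\infty+\|\psi\circ\lambda_n-\psi\|_\infty$, and the last term vanishes by uniform continuity of $\psi$ together with $\|\lambda_n-\mathrm{id}\|_\infty\to 0$; the same applies to $y_n$ and $\varphi$.

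Next I would use $c_n\to\infty$ to deduce that $y_n$ converges uniformly to the identity. From the second display, $\sup_t|y_n(t)-t|\le c_n^{-1}\left(\|\varphi\|_\infty+o(1)\right)\to 0$, which is the key quantitative input allowing me to substitute $y_n(t)$ into the outer fluctuation without introducing error.

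The main step is then the decomposition
\[
c_n\left(x_n\circ y_n(t)-t\right)=\underbrace{c_n\big(x_n(y_n(t))-y_n(t)\big)}_{(\mathrm{I})}+\underbrace{c_n\big(y_n(t)-t\big)}_{(\mathrm{II})}.
\]
Term $(\mathrm{II})$ converges to $\varphi$ uniformly by the second display. For $(\mathrm{I})$, writing $g_n(s):=c_n(x_n(s)-s)$ so that $(\mathrm{I})=g_n(y_n(t))$, I would estimate
\[
\sup_{t\in[0,1]}\left|g_n(y_n(t))-\psi(t)\right|\le \|g_n-\psi\|_\infty+\sup_{t\in[0,1]}\left|\psi(y_n(t))-\psi(t)\right|.
\]
The first term vanishes by the first display, and the second vanishes because $\psi$ is uniformly continuous and $y_n\to\mathrm{id}$ uniformly. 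Hence $(\mathrm{I})\to\psi$ uniformly, and adding the two contributions gives uniform, and in particular $J_1$, convergence of $c_n(x_n\circ y_n(\cdot)-\cdot)$ to $\psi+\varphi$.

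The only genuinely delicate point is the passage from $J_1$ to uniform convergence and the consequent reliance on uniform continuity of the continuous limits $\psi,\varphi$; once that is secured, the argument is the triangle-inequality manipulation above. I would also remark that $x_n\circ y_n$ is again non-decreasing and c\`adl\`ag (right-continuity of the composition follows from right-continuity and monotonicity of both factors), so the statement is well-posed in the Skorohod space.
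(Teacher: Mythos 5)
Your argument is correct: the decomposition $c_n(x_n\circ y_n - \mathrm{id}) = c_n(x_n-\mathrm{id})\circ y_n + c_n(y_n-\mathrm{id})$, combined with the fact that $J_1$ convergence to a continuous limit is uniform and that $y_n\to\mathrm{id}$ uniformly at rate $c_n^{-1}$, is exactly the standard proof of this composition lemma. The paper does not prove this statement itself but cites it from the literature (Vervaat, Whitt), and your argument is essentially the one found there, so there is nothing to add.
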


The preceding lemma and Proposition \ref{prop:PoissonBridgeLimit} imply the following.
\begin{corollary}\label{cor:sconverge}
     Let $Y^\ml = Y^\ml_n$, $Y^\mr = Y^\mr_n$ and $S = S_n$ be as in Lemma \ref{lem:poissonRep1} for some sequence $m = m_n\to\infty$ as $n\to\infty$. Let $A_{n,m} = \{Y^\ml(n) = m\}\cap \{Y^\mr(m) = n-1\}$.
     Suppose that $n/m_n\to \alpha\in\R_+$. Then, jointly with the convergence in Proposition \ref{prop:PoissonBridgeLimit}, 
     \begin{align*}
         (n^{-1/2} S_n(nt);t\in[0,1])|A_{n,m} \weakarrow \left(\sqrt{\alpha}B_{\operatorname{br}}^\ml + B_{\operatorname{br}}^\mr \right).
     \end{align*}
\end{corollary}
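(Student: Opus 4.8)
The plan is to rewrite $n^{-1/2}S_n(nt)$ as the fluctuation of a \emph{composition} of two rescaled, conditioned random walks and then apply the composition Lemma \ref{lem:vervaat1}. First I would introduce the rescaled processes
\begin{align*}
    \widehat Y^\ml(t) = m^{-1} Y^\ml(nt), \qquad \widehat Y^\mr(t) = n^{-1}Y^\mr(mt), \qquad t\in[0,1],
\end{align*}
with the usual c\`adl\`ag extension. On the event $A_{n,m}$ we have $Y^\ml(n) = m$ and $Y^\mr(m) = n-1$, so both are non-decreasing maps of $[0,1]$ into $[0,1]$, with $\widehat Y^\ml(1) = 1$ and $\widehat Y^\mr(1) = (n-1)/n$. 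Since $Y^\ml(nt) = Y^\ml(\lfloor nt\rfloor)$ is integer-valued, $\widehat Y^\ml(t)$ takes values in $m^{-1}\Z$, and a direct substitution gives the exact identity $Y^\mr(Y^\ml(nt)) = n\,\widehat Y^\mr(\widehat Y^\ml(t))$. Subtracting $nt$ and dividing by $\sqrt n$ then yields
\begin{align*}
    n^{-1/2}S_n(nt) = \sqrt n\big(\widehat Y^\mr\circ\widehat Y^\ml(t) - t\big),
\end{align*}
which is exactly the object to which the composition lemma applies with $c_n = \sqrt n$.

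Next I would place both marginal fluctuations on the common scale $\sqrt n$. Proposition \ref{prop:PoissonBridgeLimit} gives, conditionally on $A_{n,m}$ and jointly, the convergence $\sqrt n(\widehat Y^\mr(t)-t)\weakarrow B^\mr_{\operatorname{br}}$ directly, whereas the natural scale for $\widehat Y^\ml$ is $\sqrt m$. Using $n/m\to\alpha$ and the fact that $\sqrt{n/m}\to\sqrt\alpha$ is a deterministic constant,
\begin{align*}
    \sqrt n\big(\widehat Y^\ml(t)-t\big) = \sqrt{n/m}\,\cdot\,\sqrt m\big(\widehat Y^\ml(t)-t\big)\weakarrow \sqrt\alpha\,B^\ml_{\operatorname{br}}.
\end{align*}
Thus, jointly and with independent coordinates, $\big(\sqrt n(\widehat Y^\mr(\cdot)-\cdot),\ \sqrt n(\widehat Y^\ml(\cdot)-\cdot)\big)\weakarrow \big(B^\mr_{\operatorname{br}},\ \sqrt\alpha\, B^\ml_{\operatorname{br}}\big)$ in the $J_1$ topology.

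Finally I would transfer to the deterministic statement of Lemma \ref{lem:vervaat1}. Because the limiting processes $B^\ml_{\operatorname{br}}, B^\mr_{\operatorname{br}}$ are a.s.\ continuous, I invoke the Skorohod representation theorem to realize the joint convergence almost surely on a common space; there the hypotheses of Lemma \ref{lem:vervaat1} hold with $x_n = \widehat Y^\mr$, $y_n = \widehat Y^\ml$, $c_n = \sqrt n$, $\psi = B^\mr_{\operatorname{br}}$ and $\varphi = \sqrt\alpha\, B^\ml_{\operatorname{br}}$, giving
\begin{align*}
    \sqrt n\big(\widehat Y^\mr\circ\widehat Y^\ml(t)-t\big)\ \to\ \sqrt\alpha\, B^\ml_{\operatorname{br}} + B^\mr_{\operatorname{br}}
\end{align*}
almost surely, hence in distribution. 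Combined with the identity from the first step, this is precisely the claimed limit for $n^{-1/2}S_n(nt)$.

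The difficulties here are bookkeeping rather than conceptual. The two main points requiring care are (i) reconciling the two distinct natural scalings $\sqrt m$ and $\sqrt n$, which is exactly where $n/m\to\alpha$ enters and produces the factor $\sqrt\alpha$ multiplying $B^\ml_{\operatorname{br}}$; and (ii) passing from the deterministic composition result of Lemma \ref{lem:vervaat1} to weakly convergent random inputs, which I handle via Skorohod representation using the continuity of the Brownian-bridge limits. A minor point is that $\widehat Y^\mr(1) = (n-1)/n \ne 1$, but $\sqrt n\big((n-1)/n - 1\big)\to 0$, consistent with the bridge being pinned at $0$, so this causes no trouble.
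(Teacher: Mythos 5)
Your argument is correct and is essentially the paper's own proof: both rescale to $\overline{Y}^\mr_n\circ\overline{Y}^\ml_n(t)-t$, place both marginals on the common scale $\sqrt{n}$ using $n/m\to\alpha$ to extract the factor $\sqrt{\alpha}$, and conclude via the composition Lemma \ref{lem:vervaat1} (with the routine Skorohod-representation step that the paper leaves implicit). No substantive differences.
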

\begin{proof}
    Let $\overline{Y}_n^\ml(t) = m^{-1} Y_n^\ml(nt)$ and $\overline{Y}_n^\mr = n^{-1}Y_n^\mr(mt)$. Then $n^{-1} S_n(nt) =  \overline{Y}_n^\mr \circ\overline{Y}_n^\ml(t)  - t.$
    By Proposition \ref{prop:PoissonBridgeLimit} we have
    \begin{align*}
        \left(\sqrt{n} \left( \overline{Y}_n^\mr (t)-t\right);t\in[0,1]\right)|A_{n,m} \weakarrow B^\mr_{\operatorname{br}}.
    \end{align*} Since $n/m_n\to \alpha$, we see that $\sqrt{n}\sim \sqrt\alpha \sqrt{m}$ and so
    \begin{align*}
        \left(\sqrt{n} (\overline{Y}^\ml_n(t) -t );t\in[0,1]\right) |A_{n,m}\weakarrow \sqrt{\alpha} B_{\operatorname{br}}^\ml.
    \end{align*} The result now follows by Lemma \ref{lem:vervaat1}.
\end{proof}

\subsection{The Vervaat Transform}\label{sec:vervaatTransform}

In this section we discuss in more detail the connection between $S_n |A_{n,m}$ and $S_n |E_{n,m}$ where $E_{n,m}$ is defined in Lemma \ref{lem:poissonRep1} as well as the connection to the scaling limits. 

We start with the operation in the continuum. Let $f:[0,1]\to \R$ be a c\`adl\`ag process without any negative jumps such that $f(0) = f(1) = f(1-) = 0$. Note that $f$ need not attain its global minimum; however, $t\mapsto f(t-)$ will. We let $\tau$ be the first time that $f(t-)$ attains the global infimum of $f$. That is
\begin{align*}
    \tau = \inf\{t: f(t-) = \inf_{s\in[0,1]} f(s)\}
\end{align*}
We extend $f:[0,1]\to\R$ to $f:\R\to\R$ by setting $f(t) = f(\{t\})$ where $\{t\}$ is the fractional part of $t$. The \textit{Vervaat transform of }$f$ is
\begin{align*}
    \sV(f)(t) = f( \tau + t) - f(\tau).
\end{align*}
In words, the Vervaat transform exchanges the pre- and post-infimum parts of $f$. See \cite{AUB.20,Bertoin.01,Vervaat.79} for more details. In \cite{Vervaat.79}, Vervaat proved that
\begin{align}\label{eqn:vervaat}
    (\sV(B_\br)(t);t\in[0,1]) \overset{d}{=} (B_{\operatorname{ex}}(t);t\in[0,1])
\end{align} where $B_{\operatorname{ex}}$ is a standard Brownian excursion.

The discrete Vervaat transform is defined slightly differently. First, consider the discrete bridge $f$ of length $n$ from $0$ to $-1$ which is of the form 
\begin{align*}
    f_n(k) = \sum_{j=1}^k (x_j-1),\qquad\textup{for}\qquad k = 0,1,\dotsm, n
\end{align*} where $x_j\in \{0,1,\dotsm\}$ and suppose that $f_n(n) = -1$. We call such a function $f_n$ a \textit{downward skip-free bridge} of length $n$. Similar to above, define
\begin{align*}
    \tau_n = \min\{k: f_n(k) = \min_{0\le j\le n} f_n(j)\}.
\end{align*}
We define the discrete Vervaat transform $\sV_n$ as
\begin{align*}
    \sV_n(f_n)(k) = \sum_{j=1}^k (x_{j+\tau_n} - 1)
\end{align*}where the index $j+\tau_n$ is interpreted modulo $n$. 

The following lemma is elementary. See \cite[Lemma 3]{Bertoin.01} or \cite[Lemma 14]{Kersting.11}.
\begin{lemma}\label{lem:vervaat2}
    Suppose that $(f_n;n\ge 1)$ is a sequence of downward skip-free bridges of length $n$ and that $f$ is a c\`adl\`ag bridge with no negative jumps. Suppose that $0<\delta_n\to0$ is a sequence of constants such that in the $J_1$ topology
    \begin{align*}
        \left(\delta_n f_n (nt) ;t\in[0,1]\right) \to (f(t);t\in[0,1]).
    \end{align*}
    Suppose that $f$ attains its global minimum uniquely and continuously, i.e. $f(\tau-) = f(\tau)<f(t)$ for all $t\neq \tau$. Then $\tau_n/n\to \tau$ and
    \begin{align*}
        \left(\delta_n\sV_n(f_n)(nt);t\in[0,1]\right)\longrightarrow (\sV(f)(t);t\in[0,1]).
    \end{align*}
\end{lemma}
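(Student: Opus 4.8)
The plan is to prove Lemma \ref{lem:vervaat2} by showing that the discrete Vervaat transform is essentially a continuous operation at the limit point $f$, provided $f$ attains its minimum uniquely. The key structural fact is that both the discrete and continuous Vervaat transforms amount to a cyclic shift of the path by the argmin, so the entire argument reduces to controlling the convergence of the shift locations $\tau_n/n \to \tau$ and then invoking continuity of the shift operation.

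First I would establish the convergence $\tau_n/n \to \tau$. Since $(\delta_n f_n(nt);t\in[0,1]) \to f$ in the $J_1$ topology and $f$ attains its global minimum uniquely and continuously (i.e. $f(\tau-)=f(\tau)<f(t)$ for all $t\neq\tau$), I would use a standard argmin-continuity argument. The subtlety is that $f_n$ is downward skip-free but $f$ may have positive jumps; nonetheless, because $f$ has no negative jumps and its infimum is attained continuously from the left at the unique time $\tau$, the location of the discrete minimum must converge. Concretely, for any $\eps>0$, $J_1$ convergence forces $\delta_n f_n$ to be close to $f$ uniformly away from the (countably many) jump times of $f$, and the strict uniqueness of the minimizer means $f$ is bounded strictly below $\inf f$ on $[0,1]\setminus(\tau-\eps,\tau+\eps)$ minus a margin; hence for large $n$ the discrete minimizer $\tau_n$ must lie in $(\tau-\eps,\tau+\eps)$. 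This is the main obstacle: one must handle the possible positive jumps of $f$ and the left-continuity convention in the definition of $\tau$ carefully, since $J_1$ convergence does not give uniform convergence at jump times, and the minimum of $f$ is taken over $f(t-)$ rather than $f(t)$.

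Once $\tau_n/n\to\tau$ is in hand, I would show the transformed paths converge. Writing $\sV_n(f_n)(nt)$ as the cyclically shifted bridge and $\sV(f)(t)=f(\tau+t)-f(\tau)$ (with $f$ extended periodically by $f(t)=f(\{t\})$), both operations are the composition of the original path with the shift map $t\mapsto \{t+\tau_n/n\}$ (resp. $t\mapsto\{t+\tau\}$) followed by recentering. Since the shift amounts converge and the shift is continuous in the Skorohod topology when applied at a point where the limit $f$ is continuous—which holds because $\tau$ is a continuity point of $f$ by assumption—I would conclude $(\delta_n \sV_n(f_n)(nt);t\in[0,1]) \to (\sV(f)(t);t\in[0,1])$ in $J_1$. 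The recentering by $\delta_n f_n(\tau_n)\to f(\tau)$ follows from the already-established convergence of the paths together with $\tau_n/n\to\tau$ and continuity of $f$ at $\tau$.

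I expect the argmin-location step to be where essentially all the real work lies; the transfer from $\tau_n/n\to\tau$ to path convergence is a routine application of the continuity of composition and shift maps in the $J_1$ topology. Since the statement is flagged as elementary with references to \cite{Bertoin.01, Kersting.11}, I would keep the exposition brief, emphasizing the unique-continuous-minimum hypothesis as exactly the condition that rules out the degenerate scenarios (ties or jumps at the minimizer) that would otherwise break continuity of the Vervaat operation.
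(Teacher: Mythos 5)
The paper gives no proof of this lemma---it defers entirely to \cite[Lemma 3]{Bertoin.01} and \cite[Lemma 14]{Kersting.11}---and your two-step argument (first $\tau_n/n\to\tau$ from the uniquely and continuously attained minimum, then continuity of the cyclic shift and recentering at a continuity point of the bridge) is precisely the standard route those references take, and it is sound. The one loose phrasing is the claim that $J_1$ convergence gives uniform closeness ``away from the jump times of $f$'': what it actually provides is uniform closeness after composing with time changes $\lambda_n$ satisfying $\|\lambda_n-\operatorname{id}\|_\infty\to 0$, and the argmin step should be run by comparing $\min\{\delta_n f_n(k): |k/n-\tau|\ge\eps\}$ against $\inf\{f(t)\wedge f(t-): |t-\tau|\ge \eps/2\}>f(\tau)$ through these time changes---a routine repair that does not affect the conclusion.
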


An important property of the discrete Vervaat transform is how it interacts with exchangeable increment processes. For this, it will be better to define cyclic shifts more generally. Let $g_n(k) = \sum_{j=1}^k y_j$ where $y_j\in \R$ for $j\in[n]$. We define
\begin{align*}
    \theta_n(g_n,i)(k) = \sum_{j=1}^k y_{j+i}
\end{align*} where, again, we interpret the index $j+i$ as its equivalence class modulo $n$. Note
\begin{equation}
   \label{eqn:compRuleForTheta} \theta_n\left( \theta_n(g_n,i_1), i_2\right) = \theta_n (g_n,i_1+i_2)
\end{equation}and $\theta_n(f_n,\tau_n) = \sV_n(f_n)$. Also, for $i\in\{0,1,\dotsm,n-1\}$ we have 
\begin{align}\label{eqn:theta_nforpath}
    \theta_n(g_n,i)(k) = \begin{cases}
        g_n(k+i)-g_n(i) &: i+k\le n\\
        g_n(k+i-n)-g_n(i) + g_n(n) &: n<i+k\le i+n.
    \end{cases}
\end{align}
The following lemma easily follows from the above observation.
\begin{lemma}\label{lem:theta_nLinear}
    For each $i$, $\theta_n(-,i)$ is linear on the collection of functions $g_n(k) = \sum_{j=1}^k y_j$. Moreover, if $g_n(k) = ck$ for some constant $c\in \R$, then $\theta_n(g_n,i) = g_n$ for all $i$. In particular,
    \begin{align*}
        \theta_n(g_n - c\operatorname{Id},i) = \theta_n(g_n,i) - c\operatorname{Id}
    \end{align*}
    where $\operatorname{Id}(k) = k$ for all $k$.
\end{lemma}

We include the following lemma containing the main results in Section 6.1 of \cite{Pitman.06}.
\begin{lemma}\label{lem:discVervaat}
    Suppose that $\xi_j$ are i.i.d. random variables such that $\PR(\xi_j\ge 0) = 1$ and $\PR(\sum_{j=1}^n (\xi_j -1)= -1)>0$. Let $S(k) = \sum_{j=1}^k (\xi_j-1)$. Let $$A_n = \{S(n) = -1\}\quad\textup{and}\quad E_n = \{S(n) = -1, S(k) \ge 0\textup{ for }k\le n-1\}.$$ Then the following hold
    \begin{enumerate}
        \item $\PR(E_n) = n^{-1} \PR(A_n)$.
        \item Let $\tau_n = \inf\{k: S(k) = \min_{k\le n}S(k)\}$. Then $\tau_n|A_n \sim \operatorname{Unif}\{1,2,\dotsm,n\}$.
        \item $\sV_n(S)| A_n \overset{d}{=} S|E_n$.
        \item Let $U_n \sim \operatorname{Unif}\{1,2,\dotms,n\}$ be independent of $S$. Then
        \begin{align*}
            (\tau_n, \theta_n(S,\tau_n))|A_n \overset{d}{=} (U_n, S)|E_n.
        \end{align*}
    \end{enumerate}
\end{lemma}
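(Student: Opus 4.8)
The plan is to reduce all four parts to a single combinatorial bijection combined with the cyclic exchangeability of the increments under $A_n$, and then to read off each claim by marginalizing one master identity. Throughout I work on the lattice of downward skip-free bridges: on $A_n$ the increments $y_j = \xi_j - 1$ lie in $\{-1,0,1,\dotsm\}$ with $\sum_{j=1}^n y_j = -1$, so $S$ is exactly a bridge of the type appearing in the definition of $\sV_n$. The first ingredient is \emph{cyclic exchangeability}: since the $\xi_j$ are i.i.d. and $A_n = \{\sum_j \xi_j = n-1\}$ is invariant under permuting coordinates, $(\xi_1,\dotsm,\xi_n)\mid A_n$ is exchangeable; as a cyclic shift of the increments is precisely the path operation $\theta_n(\cdot,i)$ (recall \eqref{eqn:theta_nforpath}), this gives
\[
\PR(S = p \mid A_n) = \PR(S = \theta_n(p,i)\mid A_n)
\]
for every bridge path $p$ and every $i$, both paths lying automatically in $A_n$ because $\theta_n$ preserves the total increment.

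The crux is the cyclic lemma phrased as a bijection. I would define $\Phi(S) = (\tau_n, \sV_n(S)) = (\tau_n, \theta_n(S,\tau_n))$ and claim it is a bijection from bridges onto $\{1,\dotsm,n\}\times\{\textup{excursions}\}$, where an excursion is a bridge $\mathcal{E}$ with $\mathcal{E}(k)\ge 0$ for $k\le n-1$. For well-definedness I must check that shifting a bridge to its first minimum always yields an excursion: the relation $S(n) = -1 < 0 = S(0)$ forces $\tau_n\in\{1,\dotsm,n\}$, and since the increments are integer valued the strict first-minimum property sharpens to $S(j)\ge S(\tau_n)+1$ for $j<\tau_n$, which is exactly what keeps the wrapped-around part of $\theta_n(S,\tau_n)$ nonnegative. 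The inverse is $\Psi(u,\mathcal{E}) = \theta_n(\mathcal{E}, n-u)$, and $\Phi\circ\Psi = \mathrm{id}$, $\Psi\circ\Phi = \mathrm{id}$ follow from the composition rule \eqref{eqn:compRuleForTheta}. The single nontrivial input here is that a shifted excursion $\theta_n(\mathcal{E},j)$ has a \emph{unique} global minimum, located at the wrap point; this again rests on the skip-free and integrality hypotheses.

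With the bijection in hand, the event identity $\{\tau_n = u,\ \sV_n(S) = \mathcal{E}\} = \{S = \theta_n(\mathcal{E}, n-u)\}$ combines with cyclic exchangeability to give, for every $u\in\{1,\dotsm,n\}$ and every excursion $\mathcal{E}$,
\[
\PR(\tau_n = u,\ \sV_n(S) = \mathcal{E}\mid A_n) = \PR(S = \theta_n(\mathcal{E}, n-u)\mid A_n) = \PR(S = \mathcal{E}\mid A_n),
\]
a quantity independent of $u$. All four parts now drop out by elementary summation. Summing over $\mathcal{E}$ shows $\PR(\tau_n = u\mid A_n) = \PR(E_n)/\PR(A_n)$ is independent of $u$, so the common value must be $1/n$; this yields simultaneously part (1), $\PR(E_n) = n^{-1}\PR(A_n)$, and part (2), $\tau_n\mid A_n\sim\operatorname{Unif}\{1,\dotsm,n\}$. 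Summing instead over $u$ gives $\PR(\sV_n(S) = \mathcal{E}\mid A_n) = n\,\PR(S=\mathcal{E}\mid A_n)$, which equals $\PR(S=\mathcal{E}\mid E_n) = \PR(S=\mathcal{E})/\PR(E_n)$ after using part (1) and $\{S=\mathcal{E}\}\subseteq E_n$; this is part (3). Finally the displayed identity says precisely that the joint law of $(\tau_n,\sV_n(S))$ under $A_n$ is the product of $\operatorname{Unif}\{1,\dotsm,n\}$ with the law of $S\mid E_n$, which is part (4), of which parts (2) and (3) are the marginals.

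The main obstacle is the bijection of the second paragraph: verifying that $\Phi$ is well-defined and invertible is the only place where the downward skip-free and lattice hypotheses are genuinely used, through the strict separation $S(j)\ge S(\tau_n)+1$ for $j<\tau_n$ and the uniqueness of the global minimum of a shifted excursion. Once this is established, the probabilistic content is just exchangeability together with bookkeeping via \eqref{eqn:compRuleForTheta}.
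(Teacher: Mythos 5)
Your proof is correct, and it is essentially the argument the paper is pointing to: the paper gives no proof of this lemma, citing Section 6.1 of Pitman's book, and your bijection-plus-cyclic-exchangeability argument is exactly the standard proof found there, with all four parts correctly read off as marginals of the master identity $\PR(\tau_n=u,\ \sV_n(S)=\mathcal{E}\mid A_n)=\PR(S=\mathcal{E}\mid A_n)$. One small imprecision: a shifted excursion $\theta_n(\mathcal{E},n-u)$ need \emph{not} have a unique global minimum (the minimum value $-1-\mathcal{E}(n-u)$ recurs at every later $k>u$ with $\mathcal{E}(k-u)=0$); what your argument actually needs, and what does hold, is only that the \emph{first} global minimum sits at the wrap point $k=u$, which suffices because $\tau_n$ is defined as the first minimizer.
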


Let us fix an $n,m$ and let $X^\ml_n, X^\mr_n$ be the child count processes associated with a uniformly chosen tree $T\in \bbG_{n,m}(0)$. Recalling Lemma \ref{lem:poissonRep1}, we see that
\begin{align*}
    (X^\ml_n,X^\mr_n, Z_n) \overset{d}{=} (Y^\ml_n,Y^\mr_n, S_n)|E_{n,m}.
\end{align*} Recall that $Y_n^\mr = (Y_n^\mr(k);k=0,1,\dotms,m)$, $Y^\ml_n = (Y^\ml_n (k); k = 0,1,\dotms,n)$ and
\begin{align*}
    S_n(k) = Y^\mr_n(Y^\mr_n(k)) - k.
\end{align*} In particular, the (unconditioned) increments of $S_n$ are i.i.d. From here, it is not hard to see using Lemma \ref{lem:discVervaat}(4) that if $U_n\sim \operatorname{Unif}\{1,\dotms,n\}$ is independent of $Y_n^\ml, Y_n^\mr$ then 
\begin{align*}
   \Big(U_n,Y_n^\ml, Y_n^\mr, S_n \Big)| E_{n,m} \overset{d}{=} \Big(\tau_n,\theta_n(Y_n^\ml,\tau_n), \theta_m(Y_n^\mr,Y_n^\ml(\tau_n)), \theta_n( S_n,\tau_n)\Big)|A_{n,m}
\end{align*}
where $\tau_n = \inf\{k: S_n(k) = \min_{j\le n}S_n(j)\}$.
Consequently, if $U_n$ is independent of $X^\ml_n,X^\mr_n$ then
\begin{align*}
   \Big(U_n,X^\ml_n, X^\mr_n) \overset{d}{=}\Big(\tau_n,\theta_n(Y_n^\ml,\tau_n), \theta_m(Y_n^\mr,Y_n^\ml(\tau_n))\Big)|A_{n,m}.
\end{align*} 
An application of Lemma \ref{lem:theta_nLinear} gives the following lemma.
\begin{lemma}\label{lem:vervaat4}
Maintain the notation above and let $\operatorname{Id}(k) = k$ for all $k$. Then
\begin{align*}
    \left((X_n^\ml - \frac{m}{n} \operatorname{Id}),(X_n^\mr - \frac{m}{n} \operatorname{Id}) \right) \overset{d}{=} \Big(\theta_n(Y_n^\ml,\tau_n)- \frac{m}{n}\operatorname{Id}, \theta_m(Y_n^\mr,Y_n^\ml(\tau_n))-\frac{n}{m}\operatorname{Id}\Big)|A_{n,m}.
\end{align*}
\end{lemma}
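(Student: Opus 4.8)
The plan is to derive Lemma~\ref{lem:vervaat4} directly from the distributional identity
\[
   \big(X^\ml_n, X^\mr_n\big) \overset{d}{=} \Big(\theta_n(Y_n^\ml,\tau_n),\, \theta_m(Y_n^\mr,Y_n^\ml(\tau_n))\Big)\big|A_{n,m}
\]
stated just above the lemma, and then to apply the linearity of the cyclic shift operators established in Lemma~\ref{lem:theta_nLinear}. Concretely, I would first subtract the appropriate linear drift from each coordinate of the right-hand side, and then verify that the two fixed linear functions $\tfrac{m}{n}\operatorname{Id}$ and $\tfrac{n}{m}\operatorname{Id}$ are exactly the ones that are preserved under $\theta_n(-,\tau_n)$ and $\theta_m(-,Y_n^\ml(\tau_n))$ respectively. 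Since the identity above holds as a pair and the map $(f,g)\mapsto (f-c_1\operatorname{Id},\,g-c_2\operatorname{Id})$ is a deterministic (measurable) transformation of the pair of paths, applying it to both sides preserves the distributional equality; this is the only structural fact needed.

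First I would treat the white coordinate. Recall that for a uniform tree $T\in\bbG_{n,m}(0)$ the increments satisfy $\sum_{j=1}^n \chi^\ml_j = m$, so $Y_n^\ml$ conditioned on $A_{n,m}$ is a downward-type bridge of length $n$ with total increment $m$; equivalently $Y_n^\ml - \tfrac{m}{n}\operatorname{Id}$ is a genuine bridge from $0$ to $0$. By Lemma~\ref{lem:theta_nLinear}, since $g_n(k)=\tfrac{m}{n}k$ is linear with slope $\tfrac{m}{n}$, we have $\theta_n\big(\tfrac{m}{n}\operatorname{Id},\,\tau_n\big)=\tfrac{m}{n}\operatorname{Id}$, and the same lemma gives
\[
   \theta_n\Big(Y_n^\ml - \tfrac{m}{n}\operatorname{Id},\,\tau_n\Big) = \theta_n\big(Y_n^\ml,\tau_n\big) - \tfrac{m}{n}\operatorname{Id}.
\]
Combining this with the white coordinate of the displayed identity yields
$X_n^\ml - \tfrac{m}{n}\operatorname{Id} \overset{d}{=} \theta_n(Y_n^\ml,\tau_n)-\tfrac{m}{n}\operatorname{Id}$ conditionally on $A_{n,m}$, which is exactly the first coordinate of the claim.

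The black coordinate is handled in the same way but with the shift index $Y_n^\ml(\tau_n)$ and the length-$m$ operator $\theta_m$. Here $Y_n^\mr$ conditioned on $A_{n,m}$ has total increment $n-1$ over its length-$m$ domain (since $\sum_{j=1}^m\chi^\mr_j = n-1$), so the natural linear drift to remove is $\tfrac{n}{m}\operatorname{Id}$; Lemma~\ref{lem:theta_nLinear} again gives $\theta_m\big(\tfrac{n}{m}\operatorname{Id},\,Y_n^\ml(\tau_n)\big)=\tfrac{n}{m}\operatorname{Id}$ and hence $\theta_m\big(Y_n^\mr-\tfrac{n}{m}\operatorname{Id},\,Y_n^\ml(\tau_n)\big)=\theta_m(Y_n^\mr,Y_n^\ml(\tau_n))-\tfrac{n}{m}\operatorname{Id}$. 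Applying the deterministic map $(f,g)\mapsto(f-\tfrac{m}{n}\operatorname{Id},\,g-\tfrac{n}{m}\operatorname{Id})$ to both sides of the joint identity then produces the lemma.

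The main subtlety I would watch for is purely notational rather than mathematical: the statement of the lemma writes $X_n^\mr - \tfrac{m}{n}\operatorname{Id}$ on the left but pairs it against $\theta_m(Y_n^\mr,Y_n^\ml(\tau_n))-\tfrac{n}{m}\operatorname{Id}$ on the right, so I would double-check whether the intended drift on the black coordinate is $\tfrac{m}{n}$ or $\tfrac{n}{m}$ and make the two sides consistent (the identity only holds coordinatewise when the \emph{same} linear function is removed on each side). Once the correct slopes are fixed, the proof is essentially a one-line invocation of Lemma~\ref{lem:theta_nLinear} applied coordinatewise to the joint distributional identity, so no genuine obstacle remains; the work is entirely in bookkeeping the two different shift indices $\tau_n$ and $Y_n^\ml(\tau_n)$ and the two different path lengths $n$ and $m$.
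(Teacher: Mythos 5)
Your proof is correct and matches the paper's: the lemma is stated there as an immediate consequence of applying Lemma \ref{lem:theta_nLinear} coordinatewise to the joint identity $(U_n, X_n^\ml, X_n^\mr) \overset{d}{=} \big(\tau_n, \theta_n(Y_n^\ml,\tau_n), \theta_m(Y_n^\mr, Y_n^\ml(\tau_n))\big)\big|A_{n,m}$, which is exactly what you do. You are also right that the $\tfrac{m}{n}$ in the second coordinate on the left-hand side of the stated lemma is a typo for $\tfrac{n}{m}$, as the subsequent use in Lemma \ref{lem:WnewLaw} confirms.
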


\subsection{Scaling limit of $W_{n,m}$}\label{sec:wnmlim}

Recall the definition of $W_{n,m}$ in \eqref{eqn:Wdef} for a uniform tree $T_n\in \bbG_{n,m}(0)$ for some sequence $m_n\to\infty$. The purpose of this section is to prove the following proposition. To state it, we let $\cS(t) = \sqrt{\alpha} B_{\br}^\ml(t) + B_\br^\mr(t) = \sqrt{1+\alpha}B_\br(t)$ where $B_\br$ is a standard Brownian bridge. Let $\tau = \inf\{u: \cS(u) = \inf_{t\in[0,1]} \cS(t)\}.$ By elementary properties of Brownian bridges, almost surely $\tau$ is the unique global minimum of $\cS$ on $[0,1]$. By \eqref{eqn:vervaat}
\begin{equation*}
(\sV(\cS)(t);t\in[0,1]) \overset{d}{= }(\sqrt{1+\alpha} \,B_{\operatorname{ex}}(t);t\in[0,1]).
\end{equation*}

\begin{proposition}\label{prop:Wlim}
    Let $T\sim \operatorname{Unif}(\bbG_{n,m}(0))$ where $n,m\to\infty$ and $n/m\to\alpha\in \R_+$. Then
    \begin{align*}
        \frac{1}{m\sqrt{n}}W_{n,m} \weakarrow  \sqrt{1+\alpha}\int_0^1 B_{\operatorname{ex}}(t)\,dt.
    \end{align*}
    Moreover, for any $k$ fixed,
    \begin{align*}
       \frac{1}{m^k n^{k/2} }\E\left[W_{n,m}^k\right] \to (1+\alpha)^{k/2} \E\left[\left(\int_0^1 B_{\operatorname{ex}}(t)\,dt\right)^k\right].
    \end{align*}
\end{proposition}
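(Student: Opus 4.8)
The plan is to prove the two statements together by first establishing the functional weak convergence, then upgrading it to convergence of moments through a uniform integrability / moment-bound argument. The starting point is the representation from \eqref{eqn:Wdef}, namely
\begin{align*}
    W_{n,m} = \sum_{s=0}^{n-1} X_n^\ml(s) + \sum_{u=0}^{m-1} X_n^\mr(u) - m(n-1),
\end{align*}
which I would rewrite in terms of the centered processes $X_n^\ml - \frac{m}{n}\operatorname{Id}$ and $X_n^\mr - \frac{n}{m}\operatorname{Id}$. The key observation is that the deterministic drift terms telescope: $\sum_{s=0}^{n-1}\frac{m}{n}s = \frac{m}{n}\binom{n}{2}$ and $\sum_{u=0}^{m-1}\frac{n}{m}u = \frac{n}{m}\binom{m}{2}$, and one checks that these, together with the $-m(n-1)$, cancel to leading order, so that $W_{n,m}$ is (up to lower-order terms) the sum of the centered fluctuation processes. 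Thus $\frac{1}{m\sqrt n}W_{n,m}$ becomes a Riemann-sum integral of the rescaled centered processes.

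\textbf{Convergence in distribution.} By Lemma \ref{lem:vervaat4}, the centered child-count processes are equal in law (given $A_{n,m}$) to the Vervaat-shifted Poisson bridge processes $\theta_n(Y_n^\ml,\tau_n)-\frac{m}{n}\operatorname{Id}$ and $\theta_m(Y_n^\mr,Y_n^\ml(\tau_n))-\frac{n}{m}\operatorname{Id}$. Corollary \ref{cor:sconverge} gives $n^{-1/2}S_n(nt)|A_{n,m}\weakarrow \cS = \sqrt{1+\alpha}\,B_\br$, and since $\cS$ attains its minimum uniquely and continuously, Lemma \ref{lem:vervaat2} yields $n^{-1/2}\sV_n(S_n)(nt)\weakarrow \sV(\cS)\overset{d}{=}\sqrt{1+\alpha}\,B_{\operatorname{ex}}$. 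To pass from the composition process $S_n$ to the two separate child-count processes, I would argue that after the Vervaat shift the appropriately rescaled sum $m^{-1/2}\big(\theta_n(Y_n^\ml,\tau_n)-\frac{m}{n}\operatorname{Id}\big)$ plus $n^{-1/2}\big(\theta_m(Y_n^\mr,\cdot)-\frac{n}{m}\operatorname{Id}\big)$ converges (jointly, via the linearity of $\theta_n$ in Lemma \ref{lem:theta_nLinear} and the joint convergence in Proposition \ref{prop:PoissonBridgeLimit}) to the shifted bridge pieces summing to $\sqrt{1+\alpha}\,B_{\operatorname{ex}}$. The continuous functional $f\mapsto\int_0^1 f(t)\,dt$ applied to the limit then gives the stated weak limit $\sqrt{1+\alpha}\int_0^1 B_{\operatorname{ex}}(t)\,dt$ by the continuous mapping theorem.

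\textbf{Convergence of moments.} Weak convergence of $\frac{1}{m\sqrt n}W_{n,m}$ implies convergence of the $k$-th moments provided the family $\big\{(m\sqrt n)^{-k}W_{n,m}^k\big\}_n$ is uniformly integrable, for which it suffices to exhibit a uniform bound on the $(k+1)$-st moments, or more simply on exponential moments. Here is where Corollary \ref{cor:uicor} enters: it gives a Gaussian tail $Ce^{-\lambda x^2}$ on $\sup_t|\sqrt m(m^{-1}Y_n^\ml(nt)-t)|+\sup_t|\sqrt n(n^{-1}Y_n^\mr(mt)-t)|$ uniformly in $n$, conditionally on $A_{n,m}$. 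Since $(m\sqrt n)^{-1}W_{n,m}$ is, up to lower-order terms, bounded by a constant times this supremum (the integral of a bounded-by-$\sup$ quantity), and since the Vervaat/$\theta_n$ operation only cyclically rearranges increments and so does not increase the relevant supremum, one obtains a uniform sub-Gaussian tail bound on $(m\sqrt n)^{-1}W_{n,m}$ conditionally on $A_{n,m}$. This controls all moments uniformly, giving uniform integrability and hence moment convergence. Finally I would identify the limiting constant as $(1+\alpha)^{k/2}\E\big[(\int_0^1 B_{\operatorname{ex}})^k\big]$, which follows directly from the scaling $\sV(\cS)\overset{d}{=}\sqrt{1+\alpha}\,B_{\operatorname{ex}}$ and pulling the scalar $\sqrt{1+\alpha}$ out of the integral raised to the $k$-th power.

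\textbf{The main obstacle} I anticipate is the bookkeeping in the moment bound: one must show carefully that the Vervaat transform and the cyclic shifts $\theta_n,\theta_m$ do not inflate the relevant suprema beyond what Corollary \ref{cor:uicor} controls, and that the conditioning on $E_{n,m}$ (equivalently the Vervaat-transformed $A_{n,m}$ picture) preserves the uniform sub-Gaussian tail. The representation \eqref{eqn:theta_nforpath} shows $\theta_n(g_n,i)$ differs from shifts of $g_n$ only by boundary corrections, so the supremum of the shifted process is bounded by twice the supremum of the original plus the bridge endpoint value, which is itself controlled; making this estimate uniform and tracking it through the rescaling by $m\sqrt n$ is the delicate part, but it is essentially a matter of combining Corollary \ref{cor:uicor} with the elementary shift identities.
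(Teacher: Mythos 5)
Your proposal is correct and follows essentially the same route as the paper: the same drift cancellation leaving $\tfrac{m-n}{2}$, the same transfer to the $A_{n,m}$-conditioned Poisson bridges via Lemma \ref{lem:vervaat4}, the same identification of the limit as $\int_0^1(\cS(t)-\cS(\tau))\,dt\overset{d}{=}\sqrt{1+\alpha}\int_0^1 B_{\operatorname{ex}}(t)\,dt$, and the same upgrade to moment convergence via the sub-Gaussian supremum bound of Corollary \ref{cor:uicor}. The only difference is one of bookkeeping: where you propose to establish functional convergence of the cyclically shifted processes directly, the paper instead applies the algebraic identity of Lemma \ref{lem:thetaAndInt} to rewrite each shifted sum as the unshifted sum plus point evaluations at $\tau_n$, which are then handled termwise in Lemma \ref{lem:helps1}.
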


We begin with some algebraic manipulations. Note from the definition of $W_{n,m}$ in \eqref{eqn:Wdef}
\begin{align*}
    W_{n,m} 
    &= \sum_{j=0}^{n-1} \left(X_n^\ml(j)-  \frac{m}{n} j\right) + \sum_{j=0}^{m-1} \left(X_n^\mr(j)- \frac{n}{m}j\right) + \frac{m}{n}\binom{n}{2} + \frac{n}{m}\binom{m}{2}  - m(n-1)\\
    &= \sum_{j=0}^{n-1} \left(X_n^\ml(j)-  \frac{m}{n} j\right) + \sum_{j=0}^{m-1} \left(X_n^\mr(j)- \frac{n}{m}j\right) + \frac{m-n}{2}.
\end{align*}
Since $X_n^\ml(n) = m$ and $X^\mr_n(m) = n-1$, we have 
\begin{equation}\label{eqn:Wequivlaw1}
    W_{n,m} = \sum_{j=0}^{n} \left(X_n^\ml(j)-  \frac{m}{n} j\right) + \sum_{j=0}^{m} \left(X_n^\mr(j)- \frac{n}{m} j\right) + \frac{m-n+2}{2}.
\end{equation}

 We now relate these two summations above to the processes $Y_n^\ml, Y_n^\mr$ given $A_{n,m}$. To do this, we will use Lemma \ref{lem:vervaat4} and the following lemma, which is a direct consequence of \eqref{eqn:theta_nforpath}.
\begin{lemma}\label{lem:thetaAndInt}
    Suppose that $g_n(k) = \sum_{j=1}^k y_j$ for $k=0,1,\dotsm,n $ for some $y_j\in \R$. Then for $i\in\{0,1,\dotsm,n-1\}$
    \begin{align*}
        \sum_{j=0}^n \theta_n(g_n,i)(j) = ig_n(n) - ng(i)+ \sum_{j=0}^n g_n(j) .
    \end{align*}
\end{lemma}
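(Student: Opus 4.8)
The plan is to directly apply the piecewise description \eqref{eqn:theta_nforpath} and split the sum over $j$ into the two regimes it distinguishes. First I would fix $i\in\{0,1,\dotsm,n-1\}$ and observe that, as $j$ runs over $\{0,1,\dotsm,n\}$, the shifted index $i+j$ runs over $\{i,i+1,\dotsm,i+n\}$, so the condition $i+j\le n$ holds exactly when $j\le n-i$ and the complementary condition $n<i+j\le i+n$ holds exactly when $n-i+1\le j\le n$. Thus \eqref{eqn:theta_nforpath} gives
$$\sum_{j=0}^n \theta_n(g_n,i)(j) = \sum_{j=0}^{n-i}\bigl(g_n(j+i)-g_n(i)\bigr) + \sum_{j=n-i+1}^n \bigl(g_n(j+i-n)-g_n(i)+g_n(n)\bigr).$$

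Next I would re-index the two shifted sums. In the first sum the substitution $\ell = j+i$ turns $\sum_{j=0}^{n-i} g_n(j+i)$ into $\sum_{\ell=i}^n g_n(\ell)$; in the second the substitution $\ell = j+i-n$ turns $\sum_{j=n-i+1}^n g_n(j+i-n)$ into $\sum_{\ell=1}^i g_n(\ell)$. Since the first range has $n-i+1$ terms and the second has $i$ terms, collecting the constant contributions produces $-(n-i+1)g_n(i)$ from the first sum and $i\bigl(g_n(n)-g_n(i)\bigr)$ from the second.

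Then I would recombine the two $g_n$-sums. Using $g_n(0)=0$ and noting that $\ell=i$ appears in both ranges,
$$\sum_{\ell=i}^n g_n(\ell) + \sum_{\ell=1}^i g_n(\ell) = \sum_{\ell=0}^n g_n(\ell) + g_n(i).$$
Combining this with the constant contributions, the coefficient of $g_n(i)$ is $-(n-i+1)-i+1 = -n$ and the coefficient of $g_n(n)$ is $i$, which yields exactly $ig_n(n)-ng_n(i)+\sum_{j=0}^n g_n(j)$, as claimed.

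There is essentially no obstacle here beyond careful bookkeeping: the only points requiring attention are the precise boundaries of the two index ranges (so that every $j\in\{0,\dotsm,n\}$ is counted exactly once), the double counting of the endpoint $\ell=i$ when merging the two re-indexed sums, and the convention $g_n(0)=0$ that lets the merged sum start at $\ell=0$. Once these are handled the identity falls out as a one-line consequence of \eqref{eqn:theta_nforpath}.
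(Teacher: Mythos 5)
Your proof is correct: the split of the sum according to the two cases of \eqref{eqn:theta_nforpath}, the re-indexing, and the bookkeeping of the coefficient $-(n-i+1)-i+1=-n$ of $g_n(i)$ all check out (including the degenerate case $i=0$). The paper states this lemma as a direct consequence of \eqref{eqn:theta_nforpath} without writing out the computation, and your argument is precisely the calculation being left to the reader.
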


We now prove the following lemma.
\begin{lemma}\label{lem:WnewLaw}
    Let $Y_n^\ml, Y_n^\mr, S_n$ be as in Lemma \ref{lem:poissonRep1} and let $A_{n,m}$ be defined as in \eqref{eqn:AnmDef}. Let $\tau_n = \min\{k: S_n(k) = \min_{j\le n} S_n(j)\}$. Then
    \begin{align}
        \nonumber\Bigg(&\sum_{j=0}^n (Y_n^\ml(j)- \frac{m}{n}j)  - n \left( Y_n^\ml(\tau_n)- \frac{m}{n}\tau_n\right) \\
     \label{eqn:Wequivlaw2}   &\qquad +\sum_{j=0}^m (Y_n^\mr(j) - \frac{n}{m}j)  -m \left( Y_n^\mr\circ Y_n^\ml(\tau_n)-\frac{n}{m} Y_n^\ml(\tau_n)\right)- Y_n^\ml(\tau_n)\\
      \nonumber  &\qquad \qquad + \frac{m-n+2}{2} \Bigg)|A_{n,m} \overset{d}{=} W_{n,m}.
    \end{align}
\end{lemma}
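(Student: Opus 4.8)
The plan is to establish the distributional identity in \eqref{eqn:Wequivlaw2} by starting from the representation of $W_{n,m}$ in \eqref{eqn:Wequivlaw1} and substituting the distributional equality from Lemma \ref{lem:vervaat4}. Recall that \eqref{eqn:Wequivlaw1} expresses $W_{n,m}$ as $\sum_{j=0}^n (X_n^\ml(j)-\frac{m}{n}j) + \sum_{j=0}^m(X_n^\mr(j) - \frac{n}{m}j) + \frac{m-n+2}{2}$, and Lemma \ref{lem:vervaat4} tells us that the pair of recentered processes $(X_n^\ml - \frac{m}{n}\operatorname{Id}, X_n^\mr - \frac{n}{m}\operatorname{Id})$ has the same law as $(\theta_n(Y_n^\ml,\tau_n) - \frac{m}{n}\operatorname{Id}, \theta_m(Y_n^\mr, Y_n^\ml(\tau_n)) - \frac{n}{m}\operatorname{Id})$ conditioned on $A_{n,m}$. (I note the statement of Lemma \ref{lem:vervaat4} writes $\frac{m}{n}$ in the second slot, but from \eqref{eqn:Wequivlaw1} and the definitions the correct centering for $X_n^\mr$ is $\frac{n}{m}$; I would use the latter, consistent with \eqref{eqn:Wequivlaw1}.) Since both displayed sums in $W_{n,m}$ are sums of the recentered processes evaluated at integer times, the whole point is to compute $\sum_{j=0}^n \theta_n(\cdot,\tau_n)(j)$ and $\sum_{j=0}^m \theta_m(\cdot, \cdot)(j)$ using Lemma \ref{lem:thetaAndInt}.

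First I would handle the white sum. Writing $g_n = Y_n^\ml$ with $g_n(n) = m$ on the event $A_{n,m}$, Lemma \ref{lem:thetaAndInt} with $i=\tau_n$ gives $\sum_{j=0}^n \theta_n(Y_n^\ml,\tau_n)(j) = \tau_n m - n Y_n^\ml(\tau_n) + \sum_{j=0}^n Y_n^\ml(j)$. Subtracting the linear centering, and using that $\theta_n(\cdot,\tau_n)$ commutes with subtracting a linear function by Lemma \ref{lem:theta_nLinear}, the first displayed sum in $W_{n,m}$ becomes $\sum_{j=0}^n (Y_n^\ml(j) - \frac{m}{n}j) + \tau_n m - n Y_n^\ml(\tau_n) - \frac{m}{n}\binom{n+1}{2} + \frac{m}{n}\cdot(\text{correction from }\theta_n(\tfrac{m}{n}\operatorname{Id}))$; I would simplify the $\tau_n$ and linear terms so that they collapse into the expression $-n(Y_n^\ml(\tau_n) - \frac{m}{n}\tau_n)$ appearing in \eqref{eqn:Wequivlaw2}. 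Concretely, $\tau_n m - n Y_n^\ml(\tau_n) = -n(Y_n^\ml(\tau_n) - \frac{m}{n}\tau_n)$ exactly, so the white block matches the first line of \eqref{eqn:Wequivlaw2} directly once the centering cancels against itself via Lemma \ref{lem:theta_nLinear}.

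Next I would handle the black sum with $g_m = Y_n^\mr$ and shift index $i = Y_n^\ml(\tau_n) \in \{0,\dots,m-1\}$ (on $A_{n,m}$ this lies in range since $Y_n^\ml(\tau_n) \le Y_n^\ml(n) = m$; the boundary case $\tau_n = n$ has probability $0$ or is handled by the endpoint convention). Here $g_m(m) = Y_n^\mr(m) = n-1$ on $A_{n,m}$, so Lemma \ref{lem:thetaAndInt} gives $\sum_{j=0}^m \theta_m(Y_n^\mr, Y_n^\ml(\tau_n))(j) = Y_n^\ml(\tau_n)(n-1) - m\, Y_n^\mr(Y_n^\ml(\tau_n)) + \sum_{j=0}^m Y_n^\mr(j)$. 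I would rewrite $Y_n^\ml(\tau_n)(n-1) - m Y_n^\mr(Y_n^\ml(\tau_n))$ as $-m(Y_n^\mr\circ Y_n^\ml(\tau_n) - \frac{n}{m}Y_n^\ml(\tau_n)) - Y_n^\ml(\tau_n)$, which is exactly the second-line black term in \eqref{eqn:Wequivlaw2} together with the stray $-Y_n^\ml(\tau_n)$ there. The main obstacle — really the only delicate point — is bookkeeping the $n$ versus $n-1$ discrepancy: because $Y_n^\mr(m) = n-1$ rather than $n$ on $A_{n,m}$, the factor $Y_n^\ml(\tau_n)\cdot g_m(m) = Y_n^\ml(\tau_n)(n-1) = \frac{n}{m}\cdot m\, Y_n^\ml(\tau_n) - Y_n^\ml(\tau_n)$ produces precisely the extra additive $-Y_n^\ml(\tau_n)$ term, and I must verify this matches the term as written in \eqref{eqn:Wequivlaw2} rather than being absorbed incorrectly. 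Assembling the white block, the black block, and the unchanged constant $\frac{m-n+2}{2}$, and invoking Lemma \ref{lem:vervaat4} to pass from $(X_n^\ml, X_n^\mr)$ to the conditioned $\theta$-processes, yields the claimed identity; the remaining steps are routine arithmetic rearrangement of the linear and constant terms.
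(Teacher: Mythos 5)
Your proposal is correct and follows essentially the same route as the paper: start from \eqref{eqn:Wequivlaw1}, pass to the shifted processes via Lemma \ref{lem:vervaat4}, and evaluate the sums with Lemma \ref{lem:thetaAndInt} (the paper applies that lemma directly to the recentered processes, using $Y_n^\ml(n)-m=0$ and $Y_n^\mr(m)-n=-1$ on $A_{n,m}$, whereas you recenter afterwards via Lemma \ref{lem:theta_nLinear}; the bookkeeping is identical, including your correct identification of the $-Y_n^\ml(\tau_n)$ term coming from $Y_n^\mr(m)=n-1$ and of the typo in the second centering of Lemma \ref{lem:vervaat4}).
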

\begin{proof}
    This is just a combination of Lemmas \ref{lem:vervaat4} and \ref{lem:thetaAndInt}. Indeed,
    \begin{align*}
        \sum_{j=0}^n &(X_n^\ml(j)-\frac{m}{n}j) \overset{d}{=} \sum_{j=0}^n \left(\theta_n(Y_n^\ml- \frac{m}{n}\operatorname{Id},\tau_n)(j)\right)  \bigg|A_{n,m}.
    \end{align*}
    Moreover, almost surely on the event $A_{n,m}$ we have $Y_n^\ml(n) = m$ and so an application of Lemma \ref{lem:thetaAndInt} gives (a.s. on $A_{n,m}$)
    \begin{align*}
      \sum_{j=0}^n& \left(\theta_n(Y_n^\ml- \frac{m}{n}\operatorname{Id},\tau_n)(j)\right)   = \tau_n \left(Y_n^\ml(n)-m\right) - n \left(Y_n^\ml(\tau_n)-\frac{m}{n}\tau_n\right)
      \sum_{j=0}^n \left(Y_n^\ml(j)-\frac{m}{n}j\right)\\
      &=\sum_{j=0}^n \left(Y_n^\ml(j)-\frac{m}{n}j\right) - n \left(Y_n^\ml(\tau_n)-\frac{m}{n}\tau_n\right).
    \end{align*}
    This gives the first term in \eqref{eqn:Wequivlaw2}. The second term is obtained similarly. The result follows from \eqref{eqn:Wequivlaw1}.
\end{proof}

Now, in order to establish scaling limits for $W_{n,m}$ we just need to establish conditional scaling limits for each of the terms appear in \eqref{eqn:Wequivlaw2}. The next lemma handles this.
\begin{lemma}\label{lem:helps1}
    Maintain the notation as Lemma \ref{lem:WnewLaw}. Then jointly the following convergences hold conditionally given $A_{n,m}$
    \begin{align*}
        &\frac{1}{m\sqrt{n}} \sum_{j=0}^n (Y_n^\ml(j)-\frac{m}{n}j) \weakarrow \int_0^1 \sqrt{\alpha} B_{\br}^\ml(t)\,dt,&&
        \frac{1}{m\sqrt{n}} \sum_{j=0}^n (Y_n^\mr(j)-\frac{n}{m}j) \weakarrow \int_0^1  B_{\br}^\mr(t)\,dt,\\
        &\frac{\sqrt{n}}{m} (Y_n^\ml(\tau_n) - \frac{m}{n}\tau_n) \weakarrow \sqrt{\alpha}B^\ml_\br(\tau), &&\frac{1}{\sqrt{n}} \left(Y_n^\mr\circ Y^{\ml} (\tau_n) - \frac{n}{m}Y_n^\ml(\tau_n) \right)\weakarrow B^\mr_\br(\tau).
    \end{align*}
    Moreover, all the prelimits above are uniformly bounded in $L^p(\Omega, \PR(-|A_{n,m}))$ for all $p\ge 1$. 
\end{lemma}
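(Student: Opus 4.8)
The plan is to express each of the four quantities as a continuous functional of the rescaled bridges
\[
\overline Y_n^\ml(t)=\sqrt m\left(m^{-1}Y_n^\ml(nt)-t\right),\qquad \overline Y_n^\mr(t)=\sqrt n\left(n^{-1}Y_n^\mr(mt)-t\right),
\]
and of the rescaled minimizer $\tau_n/n$. By Proposition \ref{prop:PoissonBridgeLimit} and Corollary \ref{cor:sconverge}, conditionally on $A_{n,m}$ the pair $(\overline Y_n^\ml,\overline Y_n^\mr)$ converges to $(B_\br^\ml,B_\br^\mr)$ and $n^{-1/2}S_n(n\,\cdot)$ converges to $\cS=\sqrt\alpha\,B_\br^\ml+B_\br^\mr$, jointly in $J_1$. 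Since $S_n\,|\,A_{n,m}$ is a downward skip-free bridge from $0$ to $-1$ and $\cS$ almost surely attains its minimum uniquely and continuously, Lemma \ref{lem:vervaat2} gives $\tau_n/n\to\tau$ jointly with the above. Passing to a space on which the Skorohod representation theorem makes all these convergences almost sure, continuity of the limits $B_\br^\ml,B_\br^\mr$ upgrades the $J_1$ convergence of $\overline Y_n^\ast$ to \emph{uniform} convergence. It then suffices to verify the four almost sure limits term by term, after which the joint weak statement follows.

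For the two summation terms I rewrite them as Riemann sums. Using $Y_n^\ml(j)-\tfrac mn j=\sqrt m\,\overline Y_n^\ml(j/n)$ and $Y_n^\mr(j)-\tfrac nm j=\sqrt n\,\overline Y_n^\mr(j/m)$,
\[
\frac{1}{m\sqrt n}\sum_{j=0}^n\left(Y_n^\ml(j)-\tfrac mn j\right)=\sqrt{\tfrac nm}\,\frac1n\sum_{j=0}^n\overline Y_n^\ml(j/n),\qquad \frac{1}{m\sqrt n}\sum_{j=0}^m\left(Y_n^\mr(j)-\tfrac nm j\right)=\frac1m\sum_{j=0}^m\overline Y_n^\mr(j/m).
\]
Because $\overline Y_n^\ml\to B_\br^\ml$ and $\overline Y_n^\mr\to B_\br^\mr$ uniformly while the mesh tends to $0$, these Riemann sums converge to $\int_0^1 B_\br^\ml(t)\,dt$ and $\int_0^1 B_\br^\mr(t)\,dt$; together with $\sqrt{n/m}\to\sqrt\alpha$ this gives the first two limits (the endpoint term $j=n$, resp. $j=m$, contributes $O(1/n)$ and is negligible).

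For the two pointwise terms I use the elementary fact that if $f_n\to f$ uniformly with $f$ continuous and $t_n\to t$, then $f_n(t_n)\to f(t)$. From $Y_n^\ml(\tau_n)-\tfrac mn\tau_n=\sqrt m\,\overline Y_n^\ml(\tau_n/n)$ and $\tau_n/n\to\tau$ we obtain
\[
\frac{\sqrt n}{m}\left(Y_n^\ml(\tau_n)-\tfrac mn\tau_n\right)=\sqrt{\tfrac nm}\,\overline Y_n^\ml(\tau_n/n)\longrightarrow\sqrt\alpha\,B_\br^\ml(\tau).
\]
For the last term, put $K_n=Y_n^\ml(\tau_n)$ and note $K_n/m=\tau_n/n+m^{-1/2}\overline Y_n^\ml(\tau_n/n)\to\tau$, since $\overline Y_n^\ml(\tau_n/n)$ is bounded; as $Y_n^\mr(K_n)-\tfrac nm K_n=\sqrt n\,\overline Y_n^\mr(K_n/m)$, the quantity equals $\overline Y_n^\mr(K_n/m)\to B_\br^\mr(\tau)$. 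This composition — checking that the inner argument $K_n/m$ converges to the \emph{same} $\tau$ that governs the other evaluations — is the most delicate point of the proof, and it is precisely here that the joint control of $\overline Y_n^\ml$ and $\tau_n/n$ is needed.

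Finally, for the $L^p$ bounds set $M_n=\sup_t|\overline Y_n^\ml(t)|+\sup_t|\overline Y_n^\mr(t)|$. Corollary \ref{cor:uicor} gives $\PR(M_n>x\mid A_{n,m})\le C e^{-\lambda x^2}$ uniformly in $n$, so $\E[M_n^p\mid A_{n,m}]$ is bounded for every $p\ge1$. Each prelimit is dominated by a fixed multiple of $M_n$: the first two by $\sqrt{n/m}\,\sup_t|\overline Y_n^\ml(t)|$ and $\sup_t|\overline Y_n^\mr(t)|$, and the two pointwise terms by $\sqrt{n/m}\,\sup_t|\overline Y_n^\ml(t)|$ and $\sup_t|\overline Y_n^\mr(t)|$, all using that $n/m$ stays bounded away from $0$ and $\infty$. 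Hence every prelimit is uniformly bounded in $L^p(\Omega,\PR(-\mid A_{n,m}))$, which completes the argument.
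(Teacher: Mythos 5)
Your proof is correct and follows essentially the same route as the paper: the same decomposition into Riemann-sum terms and evaluations at $\tau_n/n\to\tau$ (via Corollary \ref{cor:sconverge} and Lemma \ref{lem:vervaat2}), with the $L^p$ bounds from Corollary \ref{cor:uicor}. The only cosmetic difference is that you pass to a Skorokhod representation and exploit uniform convergence to continuous limits, where the paper instead cites the $J_1$ random-time-change results of Jacod--Shiryaev and Billingsley; your explicit tracking of the inner argument $K_n/m\to\tau$ in the composed term is a slightly more hands-on version of the same step.
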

\begin{proof} The uniform bound in $L^p$ follows Corollary \ref{cor:uicor}. 

    Let us write $\overline{Y}_n^\ml(t) = m^{-1}Y_n^\ml(nt)$ and $\overline{Y}_n^\ml(t) = m^{-1} Y_n^\mr(t)$ as we did in the proof of Corollary \ref{cor:sconverge}. Throughout this proof, we work conditionally given $A_{n,m}$. By applying Proposition \ref{prop:PoissonBridgeLimit} and the continuity of $f\mapsto \int_0^1 f(s)\,ds$ we see 
    \begin{align*}
        \frac{1}{m\sqrt{n}}\sum_{j=0}^n &(Y_n^\ml(j)-\frac{m}{n}j) = \frac{1}{m\sqrt{n}}\int_0^{n+1} \left(Y_{n}^\ml(\lfloor s\rfloor )- \frac{m}{n} \lfloor s\rfloor \right)\,ds\\
        &=\sqrt{n} \int_0^{1+n^{-1}}\left( \overline{Y}_n^\ml(t)-\frac{\lfloor nt\rfloor}{n}\right)\,dt  = \sqrt{{\frac{n}{m}}} \int_0^{1+o(1)} \sqrt{m}\left(\overline{Y}_n^\ml(t)-\frac{\lfloor nt\rfloor}{n}\right)\,dt\\
        &\weakarrow \sqrt{\alpha} \int_0^1 B_\br^\ml(t)\,dt.
    \end{align*}
    Similarly, $\frac{1}{m\sqrt{n}} \sum_{j=1}^m(Y_n^\mr(j)-\frac{n}{m}j) \weakarrow \int_0^1 B_\br^\mr (t)\,dt.$

    Also, by Corollary \ref{cor:sconverge} and Lemma \ref{lem:vervaat2} we see that $\tau_n/n\to \tau = \inf\{t: \cS(t) = \inf_{u\in[0,1]} \cS(u)\}$ jointly with the convergence in Proposition \ref{prop:PoissonBridgeLimit}. By Proposition 2.1 in \cite[Chapter VI]{JS.13}, if $t_n\to t$, $f_n\to f$ in the $J_1$ topology, and $f(t) = f(t-)$ then $f_n(t_n)\to f(t)$. Therefore,
    \begin{align*}
        \frac{\sqrt{n}}{m} \left(Y_n^\ml(\tau_n) - \frac{m}{n}\tau_n\right) = \sqrt{n} \left(\overline{Y}_n^\ml(\tau_n/n) - \frac{\tau_n}{n}\right) \weakarrow \sqrt{\alpha} B_\br^\ml(\tau)
    \end{align*}
    where we used the previous observation with $t_n = \tau_n/n$ and $f_n = \sqrt{n}(\overline{Y}_n^\ml(t)-t)\weakarrow \sqrt{\alpha}B_\br^\mr(t)$. The other term is analogous. Indeed,
    \begin{align*}
        \frac{1}{\sqrt{n}} \left(Y_n^\mr\circ Y_n^\ml(\tau_n)  - \frac{n}{m}Y_n^\ml(\tau_n)\right) = \sqrt{n}\left(\overline{Y}_n^\mr\circ \overline{Y}_n^\ml(\tau_n/n)  - \overline{Y}_n^\ml(\tau_n/n)\right).
    \end{align*}
    By Proposition \ref{prop:PoissonBridgeLimit} $\overline{Y}_n^\ml(t)\weakarrow t$ locally uniformly in $t$ and, in combination with \cite[pg 146]{Billingsley.99}, we have in the $J_1$ topology $$\sqrt{n}(\overline{Y}_n^\mr\circ \overline{Y}_n^\ml-\overline{Y}_n^\ml) \weakarrow B_\br^\mr.$$ The stated claim now easily follows.
\end{proof}

\begin{proof}[\textbf{Proof of Proposition \ref{prop:Wlim}}]

Using Lemma \ref{lem:WnewLaw}, we have
\begin{align*}
    \frac{1}{m\sqrt{n}} W_{n,m} \overset{d}{=}&\bigg(\frac{1}{m\sqrt{n}} \sum_{j=0}^n (Y_n^\ml(j)-\frac{m}{n}j) + \frac{\sqrt{n}}{m} (Y_n^\ml(\tau_n) - \frac{m}{n}\tau_n)\\
        &+\frac{1}{m\sqrt{n}} \sum_{j=0}^n (Y_n^\mr(j)-\frac{n}{m}j) + \frac{1}{\sqrt{n}} \left(Y_n^\mr\circ Y^{\ml} (\tau_n) - \frac{n}{m}Y_n^\ml(\tau_n) \right) + o(1)\Bigg) \big|A_{n,m}.
\end{align*}
By Lemma \ref{lem:helps1}, 
\begin{align*}
    \frac{1}{m\sqrt{n}}W_{n,m} \weakarrow &\int_0^1 \left(\sqrt{\alpha} B_\br^\ml(t) + B^\mr_\br(t)\right)\,dt - \sqrt{\alpha} B_\br^\ml(\tau) - B^\mr_\br(\tau)\\
    &= \int_0^1\left( \cS(t)-\cS(\tau)\right)\,dt \overset{d}{=} \sqrt{1+\alpha} \int_0^1 B_{\operatorname{ex}}(t)\,dt.
\end{align*}
The convergence of moments follows easily from the uniform bound in $L^p$ in Lemma \ref{lem:helps1} and, for example, Theorem 3.5 and equation (3.18) in \cite{Billingsley.99}.
\end{proof}

\subsection{Proof of Theorem \ref{thm:main}}

The proof of Theorem \ref{thm:main} is now straight-forward. Propositions \ref{prop:Radon-Nikodym} and \ref{prop:Wlim} imply
\begin{equation*}
    \frac{\#\bbG_{n,m}(k)}{\#\bbG_{n,m}(0)} = \E\left[\binom{W_{n,m}}{k}\right] \sim \frac{1}{k!}(1+\alpha)^{k/2} \E\left[\left(\int_0^1 B_{\operatorname{ex}}(t)\,dt\right)^k\right].
\end{equation*}
This is the desired result.

\bibliography{ref, references}
\bibliographystyle{amsplain}

\end{document}